\newcommand{\Q}{{\mathbb Q}}
\newcommand{\Ga}{\mathrm{Gal}}
\newtheorem{thm}{Theorem}[section]
\newtheorem{lemma}[thm]{Lemma}
\newtheorem{prop}[thm]{Proposition}
\newtheorem{cor}[thm]{Corollary}
\theoremstyle{definition}
\begin{document}

\title[Applications of the Fixed Point Theorem to algebraic groups]{Applications of the Fixed Point Theorem for group actions on buildings to algebraic groups over polynomial rings}

\author[P.~Abramenko]{Peter Abramenko}

\author[A.S.~Rapinchuk]{Andrei S. Rapinchuk}

\author[I.A.~Rapinchuk]{Igor A. Rapinchuk}

\address{Department of Mathematics, University of Virginia,
Charlottesville, VA 22904-4137, USA}

\email{pa8ex@virginia.edu}

\address{Department of Mathematics, University of Virginia,
Charlottesville, VA 22904-4137, USA}

\email{asr3x@virginia.edu}

\address{Department of Mathematics, Michigan State University, East Lansing, MI 48824, USA}

\email{rapinchu@msu.edu}

\begin{abstract}
We apply the Fixed Point Theorem for the actions of finite groups on Bruhat--Tits buildings and their products to establish two results concerning the groups of points of reductive algebraic groups over polynomial rings in one variable, assuming that the base field is of characteristic zero. First, we prove that for a reductive $k$-group $G$, every finite subgroup of $G(k[t])$ is conjugate to a subgroup of $G(k)$. This, in particular, implies that if $k$ is a finite extension of the $p$-adic field $\mathbb{Q}_p$, then the group $G(k[t])$ has finitely many conjugacy classes of finite subgroups, which is a well-known property for arithmetic groups. Second, we give a give a short proof of the theorem of Raghunathan--Ramanathan \cite{RagRam} about $G$-torsors over the affine line.
\end{abstract}

\maketitle

\hfill \parbox[t]{5cm}{\it In memoriam \newline Jacques Tits}

\vskip3mm

\section{Introduction}\label{S:I}

In 1872, in his very influential ``Erlanger Programm'', Felix Klein suggested to base the study of (certain) geometric spaces on the
analysis of their transformation groups, thus assigning groups a fundamental role in geometry. In the late 1950s and early 1960s,
Jacques Tits reversed this idea. He invented geometric structures that he called ``buildings'' in order to interpret (certain) groups of Lie
type as symmetry groups. Until then, some of these groups, particularly those of exceptional type, had been studied in a purely algebraic
manner. Through the use of buildings, important parts of modern group theory have become accessible to geometric methods and ideas, and
this has continued to be a crucial link between group theory and geometry, with many applications ever since.
The goal of this paper
is to present several results on the structure and Galois cohomology of the groups of points of absolutely almost simple algebraic groups over polynomial rings that rely in a critical way on the analysis of the actions of these groups on affine Bruhat--Tits buildings and the Fixed Point Theorem for actions of finite groups on these buildings and their products. We hope that this approach can be developed further to treat similar issues for the groups of points over the coordinate rings of more general affine curves.

First, one of the notable consequences of reduction theory for arithmetic groups is that every arithmetic subgroup has finitely many conjugacy classes of finite subgroups (cf. \cite{BoHC}, \cite[Ch. 4, Theorem 4.3]{PlRa}) --- we will refer to this property as (FC). Later, using a combination of various techniques, including the Fixed Point Theorem for group actions on CAT(0) spaces, it was shown in \cite[Theorem 1.4]{GrPl} that (FC) remains valid for \emph{all} finite extensions of arithmetic groups.
We will use the Fixed Point Theorem to establish (FC) for groups of the form $G(k[t])$, where $G$ is a reductive algebraic group over a nonarchimedean local field $k$ of characteristic zero, i.e.,~over a finite extension of some $p$-adic field $\mathbb{Q}_p$ (see Theorem  \ref{T:FC-1}).  This result can be viewed
as a contribution to the theory of algebraic groups over function fields of $p$-adic curves, where very significant progress has been achieved over the last decade on the cohomological Hasse principle and related issues --- see \cite{Par1}, \cite{Par2}, and references therein for the most recent installments of this work. Nevertheless, the question about (FC) for the groups of points over the coordinate rings of such curves has not been previously addressed. We plan to investigate this question for curves other than the affine line in future work. It should be pointed out that our finiteness result is derived from the following statement that is of independent interest.
\begin{thm}\label{T:I1}
Let $G$ be a reductive algebraic group over a field $k$ of characteristic 0. Then every finite subgroup of $G(k[t])$ is conjugate to a subgroup contained in $G(k)$.
\end{thm}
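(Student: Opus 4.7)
The plan is to apply the Bruhat--Tits Fixed Point Theorem for finite group actions on CAT(0) spaces to a suitable Bruhat--Tits building on which $G(k[t])$ acts naturally. Let $k_\infty := k((1/t))$ denote the completion of $k(t)$ at the place $t = \infty$, with ring of integers $\mathcal{O}_\infty := k[[1/t]]$, and let $\mathcal{B}_\infty$ be the Bruhat--Tits building of $G$ over $k_\infty$. Since every polynomial in $t$ is a Laurent polynomial in $1/t$, we have $k[t] \subset k_\infty$, and hence $G(k[t]) \subset G(k_\infty)$. The finite group $F$ therefore acts on $\mathcal{B}_\infty$ by isometries and, as $\mathcal{B}_\infty$ is a complete CAT(0) space, the Fixed Point Theorem furnishes a point $x \in \mathcal{B}_\infty$ fixed by $F$.

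The pivotal algebraic observation is the identity
\[
k[t] \cap \mathcal{O}_\infty = k,
\]
valid because a polynomial that is simultaneously a power series in $1/t$ must be constant. This immediately yields $G(k[t]) \cap G(\mathcal{O}_\infty) = G(k)$ inside $G(k_\infty)$. Since $G$ is reductive over $\mathcal{O}_\infty$ (being base-changed from $k$), the parahoric $G(\mathcal{O}_\infty)$ is hyperspecial, stabilizing a distinguished vertex $v_\infty \in \mathcal{B}_\infty$. To prove the theorem it then suffices to exhibit some $g \in G(k[t])$ such that $g F g^{-1}$ fixes $v_\infty$; for then $g F g^{-1} \subset G(k[t]) \cap G(\mathcal{O}_\infty) = G(k)$.

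Producing such a $g$ is the technical heart of the argument. The strategy is to show that the $G(k[t])$-orbit of $v_\infty$ meets the fixed-point set $\mathcal{B}_\infty^F$, by establishing a fundamental-domain description for the action of $G(k[t])$ on $\mathcal{B}_\infty$ of Soul\'e--Margaux type: $G(k[t])$ admits a fundamental domain contained in a sector based at $v_\infty$, with pointwise stabilizers of the form $U_P(k[t]) \rtimes L_P(k)$ for parabolic subgroups $P = L_P \cdot U_P$ of $G$ over $k$. Since $\mathrm{char}\,k = 0$, the unipotent group $U_P(k[t])$ is torsion-free (an iterated extension of additive $k[t]$-modules), so any finite subgroup of such a stabilizer projects injectively onto $L_P(k) \subset G(k)$, and a further conjugation inside this Levi places the image in $G(k)$.

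The main obstacle is establishing this fundamental-domain description for an arbitrary reductive $G$, not assumed split or quasi-split. I expect this step to invoke the product building $\mathcal{B}_0 \times \mathcal{B}_\infty$ highlighted in the abstract---with $\mathcal{B}_0$ the building of $G$ over $k((t))$---so that the fixed point theorem applied simultaneously at $t=0$ and $t=\infty$ provides the rigidity needed in the non-split case, controlling those features of $G$ that no single completion can see. Routine reductions handle the passage from reductive to semisimple simply connected $G$: tori satisfy $T(k[t]) = T(k)$, and isogenies are managed through Galois cohomology.
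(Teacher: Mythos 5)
You have the right skeleton: let $G(k[t]) \subset G(K)$ with $K = k((1/t))$ act on the Bruhat--Tits building, apply the Fixed Point Theorem to the finite subgroup $F$, move the fixed point into the Soul\'e--Margaux sector, and use the description of stabilizers of points in the sector as semidirect products of a unipotent piece and a Levi over $k$. This is indeed the paper's strategy. But there is a genuine gap at the last step, and a misreading that is worth flagging.

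The gap: you write that the finite subgroup ``projects injectively onto $L_P(k) \subset G(k)$, and a further conjugation inside this Levi places the image in $G(k)$.'' This is exactly where the argument fails as stated. Injectivity of the projection says only that $F$ is \emph{isomorphic} to a subgroup of $G(k)$, not that it is \emph{conjugate} to one --- and a conjugation ``inside the Levi'' accomplishes nothing since the image is already in $L_P(k) \subset G(k)$. What you actually need is a conjugation by an element of the unipotent piece $\mathscr{U}(k)$ that moves $F$ itself into $L_P(k)$. Writing $F \hookrightarrow \mathscr{U}(k) \rtimes L_P(k)$ as $\delta \mapsto (\xi(\delta),\delta)$, the injective projection gives a $1$-cocycle $\xi \colon F \to \mathscr{U}(k)$, and such a conjugating element exists if and only if $\xi$ is a coboundary, i.e.\ one must show $H^1(F, \mathscr{U}(k)) = 1$. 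This is the content of the paper's Lemmas 3.2--3.4: in characteristic zero, $\mathscr{U}$ is an iterated extension of $\mathbb{G}_a$'s, whose $k$-points form uniquely divisible groups, and the vanishing of $H^1$ for a finite group acting on a uniquely divisible abelian group propagates through the central series by a short exact sequence argument. Torsion-freeness of the unipotent piece is necessary but not sufficient; the cohomological input is essential.

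The misreading: you conjecture that the ``products of buildings'' mentioned in the abstract refers to $\mathcal{B}_0 \times \mathcal{B}_\infty$ over the two places $t = 0$ and $t = \infty$. In fact the paper never uses the building at $t = 0$ for this theorem. The product $\mathscr{B} = \mathscr{B}_1 \times \cdots \times \mathscr{B}_r$ is taken over the $k$-simple components $H_1,\dots,H_r$ of the semisimple part of $G$: each $\widetilde{H}_i$ is written as a Weil restriction $\mathrm{R}_{\ell_i/k}(\widetilde{\mathscr{H}}_i)$ from an absolutely almost simple simply connected group, and $\mathscr{B}_i$ is the building of $\widetilde{\mathscr{H}}_i$ over $\ell_i((1/t))$. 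This reduction is what makes Margaux's fundamental-domain theorem --- stated only for absolutely almost simple simply connected groups --- applicable, and it handles the non-split case without any input from the place $t = 0$. Your observation that $k[t] \cap k[[1/t]] = k$, whence $G(k[t]) \cap G(\mathcal{O}_\infty) = G(k)$, is correct and pleasant, but it is not actually used: the sector is anchored at a vertex $p^0$ with $\Gamma_{p^0} = G(k)$ by construction, and the proof runs through the stabilizers of \emph{all} points of the sector, not just $p^0$.
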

We note that if $p = \mathrm{char}\: k > 0$ and the derived group $[G , G]$ of $G$ is $k$-isotropic, then the group $G(k[t])$ contains finite abelian $p$-subgroups of unbounded orders (even when $k = \mathbb{F}_p$ is the field with $p$ elements). This implies that the above theorem fails in this situation and no finiteness theorem can be established unless we limit ourselves to finite subgroups of order prime to $p$.

Second, the result on (FC) for arithmetic groups  was used in \cite{BoSe} to establish the finiteness of the cohomology set $H^1(\mathfrak{g} , \Gamma)$ for {\it certain} actions of a finite group $\mathfrak{g}$ on an arithmetic group $\Gamma$, which was then applied to prove the properness of the global-to-local map in Galois cohomology in some situations. Subsequently, the finiteness of $H^1(\mathfrak{g} , \Gamma)$ was established in \cite[Theorem 1.5]{GrPl} for {\it all} actions of a finite group $\mathfrak{g}$ on an arithmetic group $\Gamma$ as a consequence of the result on (FC) for all finite extensions of arithmetic groups. We will utilize this approach, in conjunction with the techniques employed in the proof of Theorem \ref{T:I1}, to give a short proof of the theorem of Raghunathan--Ramanathan \cite{RagRam} (see Theorem \ref{T:RagRam}). The proof  quickly reduces to the following statement, which we then establish.
\begin{thm}\label{T:I2}
Let $G$ be a reductive algebraic group over a field $k$ of characteristic 0. Then for any finite Galois extension $\ell/k$, the natural map
$$
H^1(\mathrm{Gal}(\ell/k) , G(\ell)) \longrightarrow H^1(\mathrm{Gal}(\ell/k) , G(\ell[t]))
$$
is a bijection.
\end{thm}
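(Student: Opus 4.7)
The map admits a $\mathrm{Gal}(\ell/k)$-equivariant retraction induced by evaluation at $t=0$, so injectivity is immediate. For surjectivity, I would reformulate matters in the spirit of Theorem \ref{T:I1}. Write $\mathfrak{g} := \mathrm{Gal}(\ell/k)$. A cocycle $c \in Z^1(\mathfrak{g}, G(\ell[t]))$ is the same datum as a splitting $\sigma \mapsto (c(\sigma), \sigma)$ of the projection $G(\ell[t]) \rtimes \mathfrak{g} \twoheadrightarrow \mathfrak{g}$, whose image $F_c$ is a finite subgroup of the semi-direct product isomorphic to $\mathfrak{g}$; two cocycles are cohomologous precisely when the corresponding subgroups are $G(\ell[t])$-conjugate. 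Thus the theorem reduces to the following Galois-equivariant refinement of Theorem \ref{T:I1}: every finite subgroup of $G(\ell[t]) \rtimes \mathfrak{g}$ projecting isomorphically onto $\mathfrak{g}$ is $G(\ell[t])$-conjugate into $G(\ell) \rtimes \mathfrak{g}$.

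To prove this assertion I would carry over the Fixed Point Theorem argument used for Theorem \ref{T:I1}. Set $K := \ell((t^{-1}))$ with ring of integers $\cO := \ell[[t^{-1}]]$, and let $X$ be the Bruhat--Tits building of $G/K$. The subgroup $G(\ell[t]) \subset G(K)$ acts on $X$ by isometries, $\mathfrak{g}$ acts on $X$ through its natural $t^{-1}$-fixing action on $K$, and together they give an isometric action of $G(\ell[t]) \rtimes \mathfrak{g}$ on $X$; restricted to $F_c$ this specialises to the twisted action $\sigma \cdot x = c(\sigma) \sigma(x)$. Since $F_c$ is finite and $X$ is $\mathrm{CAT}(0)$, the Fixed Point Theorem yields a point $x_0 \in X$ with $c(\sigma) \sigma(x_0) = x_0$ for every $\sigma$. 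Because the residue field of $K$ has characteristic zero, one may further arrange $x_0$ to be a hyperspecial vertex whose polysimplicial type matches that of a $\mathfrak{g}$-stable hyperspecial vertex $v_0$ in the standard apartment, attached to an integral model $G_\cO$.

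The vertex $v_0$ satisfies $\mathrm{Stab}_{G(K)}(v_0) \cap G(\ell[t]) = G(\cO) \cap G(\ell[t]) = G(\ell)$, since $\ell[[t^{-1}]] \cap \ell[t] = \ell$. Granted an element $g \in G(\ell[t])$ with $g \cdot x_0 = v_0$, the subgroup $g F_c g^{-1} \subset G(\ell[t]) \rtimes \mathfrak{g}$ would fix $v_0$ and thus lie inside $(\mathrm{Stab}_{G(K)}(v_0) \cap G(\ell[t])) \rtimes \mathfrak{g} = G(\ell) \rtimes \mathfrak{g}$, furnishing a representative of $[c]$ valued in $G(\ell)$. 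The main obstacle is precisely this orbit step: one needs the $G(\ell[t])$- and $G(K)$-orbits of $v_0$ in $X$ to coincide, which amounts to the Iwasawa-type factorisation $G(K) = G(\cO) \cdot G(\ell[t])$. For $G$ simply connected semisimple this is a reformulation of Harder's theorem on the triviality of $G$-torsors over $\mathbb{P}^1_\ell$; the general reductive case would be deduced from this via the exact sequence $1 \to [G,G] \to G \to T \to 1$ and the elementary identity $T(\ell[t]) = T(\ell)$ valid for any $\ell$-torus $T$.
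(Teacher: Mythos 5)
Your reformulation---cocycles as splittings of $G(\ell[t]) \rtimes \mathfrak{g} \twoheadrightarrow \mathfrak{g}$, cohomology classes as $G(\ell[t])$-conjugacy classes of such splittings---and the overall strategy of applying the Fixed Point Theorem to the action of $G(\ell[t]) \rtimes \mathfrak{g}$ on the Bruhat--Tits building over $\ell(\!(1/t)\!)$ match the paper's approach. However, the step where you pass from the CAT(0) fixed point to a hyperspecial vertex contains a genuine gap. The Fixed Point Theorem produces a point $x_0$ fixed by the finite twisted action, but the fixed-point set of a finite group acting on a building is merely a closed convex subset; it need not contain any vertex at all, let alone a hyperspecial vertex lying in the $G(\ell[t])$-orbit of $v_0 = p^0_\ell$. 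The carrier of $x_0$ can be any face of the (polysimplicial) fundamental sector $\mathscr{Q}_\ell$, and its stabilizer is correspondingly smaller than a hyperspecial parahoric. The characteristic-zero hypothesis on the residue field does not help with this: it controls cohomology of unipotent groups, not the location of the fixed point.

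The paper circumvents this by arguing more delicately: after moving $x_0$ into $\mathscr{Q}_\ell$, it considers the carrier $F$ of $x_0$, proves $F$ is $\mathscr{G}$-stable using the uniqueness in Theorem \ref{T:Marg}, and uses type-preservation (Proposition \ref{P:TP} together with Lemma \ref{L:TP}) to conclude that the cocycle takes values in the pointwise stabilizer $H(F)$, which by Proposition \ref{P:stab2} has the form $\mathscr{U}(\ell) \rtimes L(\ell)$ for a $k$-defined $k$-split unipotent $\mathscr{U}$ and a $k$-defined reductive $L \subset G$. The remaining step, which your sketch omits entirely because you aimed directly at stabilizer $= G(\ell)$, is the unipotent vanishing $H^1(\mathscr{G}, \mathscr{U}(\ell)) = 1$ (Lemma \ref{L:1} plus \cite[Ch. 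II, Proposition 2.9]{PlRa}), which pushes the cocycle into $L(\ell) \subset G(\ell)$. Also note that the paper first reduces to the case of an absolutely simple adjoint \emph{quasi-split} group --- that reduction is not cosmetic, since Proposition \ref{P:stab2} is formulated in the quasi-split setting and guarantees the $k$-rationality of the semi-direct decomposition of the stabilizer. Your appeal to Harder's theorem for the Iwasawa factorization $G(K) = G(\mathcal{O}) G(\ell[t])$ is in the right spirit but cannot substitute for this stabilizer analysis.
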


It follows from this result that if $k$ is a local field of characteristic zero, then the set $H^1(\mathrm{Gal}(\bar{k}/k) , G(\bar{k}[t]))$ is finite. The result of \cite{CGP} on torsors over the punctured affine line implies the finiteness of the set  $H^1(\mathrm{Gal}(\bar{k}/k) , G(\bar{k}[t , t^{-1}])$
over such $k$. It would be interesting to see if the finiteness result remains valid over the coordinate rings of more general $p$-adic curves.

\section{The action of $G(k[t])$ on the relevant building}\label{S:building}

We begin this section with a summary of the results of B.~Margaux \cite{Marg} that generalize results established by C.~Soul\'e \cite{Soule} in the split case. It should be noted that for anisotropic groups, the relevant building degenerates into a point, but the results we need remain valid in this case due to Proposition \ref{P:anis} below.

\vskip1mm

\noindent {\bf 2.1. Results of B.~Margaux \cite{Marg}.} Let $G$ be an absolutely almost simple simply connected algebraic group over a field $k$. Fix a maximal $k$-split torus $S$ of $G$, and let $T$ be a maximal $k$-torus containing $S$. Take $\ell/k$ to be a finite Galois extension with Galois group $\mathscr{G} = \mathrm{Gal}(\ell/k)$ such that $T$ splits over $\ell$ (we do not assume that $\ell$ is necessarily the minimal splitting field of $T$).
Let $\Phi = \Phi(G , T)$ (resp., $\Phi_r = \Phi(G , S)$) denote the corresponding absolute (resp., relative) root system. We choose compatible orderings on $\Phi$ and $\Phi_r$, and denote by $\Delta$ and $\Delta_r$ the corresponding systems of simple roots.

Next, we set $K = k(\!(1/t)\!)$ and $L = \ell(\!(1/t)\!)$, and let $\mathscr{B}$ (resp., $\mathscr{B}_{\ell}$) denote the (affine) Bruhat--Tits building associated with the group $G_K := G \times_k K$ (resp., $G_L = G \times_k L$). Both $\mathscr{B}$ and $\mathscr{B}_{\ell}$ possess  natural simplicial complex structures, and there is an isometric embedding $j \colon \mathscr{B} \to \mathscr{B}_{\ell}$ that identifies $\mathscr{B}$ with the fixed set $(\mathscr{B}_{\ell})^{\mathscr{G}}$ under the natural action of $\mathscr{G}$. The hyperspecial subgroup $G(\ell[\![1/t]\!])$ of $G(L)$ fixes a unique vertex $p^0_{\ell} \in \mathscr{B}_{\ell}$. Since the subgroup $G(\ell[\![1/t]\!])$ is $\mathscr{G}$-invariant, the point $p^0_{\ell}$ is $\mathscr{G}$-fixed, hence $p^0_{\ell} = j(p^0)$ for $p^0 \in \mathscr{B}$. Let $\mathscr{A}$ (resp., $\mathscr{A}_{\ell}$) be the standard apartment of $\mathscr{B}$ (resp., $\mathscr{B}_{\ell}$) associated with $S$ (resp., $T_L$). Then
\begin{equation}\label{E:apart}
\mathscr{A} = p^0 + \mathrm{Hom}_{k\text{-}\mathrm{gr}}(\mathbb{G}_m , S) \otimes_{\mathbb{Z}} \mathbb{R} \ \ \text{and} \ \ \mathscr{A}_{\ell} = p^0_{\ell} + \mathrm{Hom}_{\ell\text{-}\mathrm{gr}}(\mathbb{G}_m , T) \otimes_{\mathbb{Z}} \mathbb{R},
\end{equation}
hence $j(\mathscr{A}) = (\mathscr{A}_{\ell})^{\mathscr{G}}$. Let
$$
\langle \cdot , \cdot \rangle \colon \mathrm{Hom}_{k\text{-}\mathrm{gr}}(S , \mathbb{G}_m) \times \mathrm{Hom}_{k\text{-}\mathrm{gr}}(\mathbb{G}_m , S) \to \mathbb{Z}
$$
be the canonical pairing. We then define the {\it sector} (quartier) by
$$
\mathscr{Q} = p^0 + D, \ \ \text{where} \ \ D:= \{ \lambda \in \mathrm{Hom}_{k\text{-}\mathrm{gr}}(\mathbb{G}_m , S) \otimes_{\mathbb{Z}} \mathbb{R} \ \vert \ \langle \alpha , \lambda \rangle \geq 0 \ \ \forall \alpha \in \Delta_r \}.
$$
The following theorem, which is Theorem 2.1 in \cite{Marg}, generalizes the result that was established by Soul\'e \cite{Soule} for split groups.
\begin{thm}\label{T:Marg}
The set $\mathscr{Q}$ is a simplicial fundamental domain for the action of $G(k[t])$ on $\mathscr{B}$. In other words, every simplex of $\mathscr{B}$ is equivalent under the action of $G(k[t])$ to a unique simplex of $\mathscr{Q}$.
\end{thm}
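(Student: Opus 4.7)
The plan is to carry out a Galois descent from Soul\'e's original theorem in the split case. Choose a finite Galois extension $\ell'/k$ containing $\ell$ and over which $G$ splits, and set $\mathscr{G}' := \mathrm{Gal}(\ell'/k)$; Soul\'e's theorem then applies to the split group $G_{\ell'}$ over $K' = \ell'(\!(1/t)\!)$, giving that $\mathscr{Q}_{\ell'}$ is a simplicial fundamental domain for $G(\ell'[t])$ acting on $\mathscr{B}_{\ell'}$. The starting observation, in the spirit of the apartment identifications in (\ref{E:apart}), is that one actually has $\mathscr{Q} = \mathscr{Q}_{\ell'}^{\mathscr{G}'}$: this follows from the description of $\mathscr{Q}$ in terms of cocharacters of $S$ together with the facts that $\mathrm{Hom}_{k\text{-}\mathrm{gr}}(\mathbb{G}_m,S)\otimes\mathbb{R}$ coincides with the $\mathscr{G}'$-invariants of $\mathrm{Hom}_{\ell'\text{-}\mathrm{gr}}(\mathbb{G}_m,T)\otimes\mathbb{R}$, and that the restriction map $\Delta_{\ell'} \to \Delta_r \cup \{0\}$ converts the defining inequalities of $\mathscr{Q}_{\ell'}$ into those of $\mathscr{Q}$ after taking invariants.

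For existence, let $\sigma \subset \mathscr{B}$ be a simplex. Soul\'e's theorem for $G_{\ell'}$ produces $h \in G(\ell'[t])$ and a \emph{unique} simplex $\tau \subset \mathscr{Q}_{\ell'}$ with $h \cdot j(\sigma) = \tau$. Since $j(\sigma)$ is $\mathscr{G}'$-fixed, Galois equivariance combined with uniqueness forces $\gamma(\tau) = \tau$ for every $\gamma \in \mathscr{G}'$: indeed, $\gamma(\tau) = \gamma(h)\cdot j(\sigma)$ also lies in $\mathscr{Q}_{\ell'}$ and in the $G(\ell'[t])$-orbit of $j(\sigma)$, so it must equal $\tau$. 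Hence $\tau \in \mathscr{Q}_{\ell'}^{\mathscr{G}'} = \mathscr{Q}$. The remaining task is to replace $h$ by an element of $G(k[t])$: the set $E := \{\, g \in G(\ell'[t]) : g \cdot \sigma = \tau \,\} = h \cdot \mathrm{Stab}_{G(\ell'[t])}(\sigma)$ is a non-empty $\mathscr{G}'$-stable coset, and we need a $\mathscr{G}'$-fixed representative. Uniqueness is then immediate: if $g \in G(k[t]) \subset G(\ell'[t])$ carries $\sigma_1 \subset \mathscr{Q}$ to $\sigma_2 \subset \mathscr{Q}$, the uniqueness assertion of Soul\'e's theorem over $\ell'$ applied to $j(\sigma_1),j(\sigma_2) \subset \mathscr{Q}_{\ell'}$ yields $\sigma_1 = \sigma_2$.

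The main obstacle is precisely this final descent of the transporter, which amounts to the triviality of a non-abelian cohomology class in $H^1(\mathscr{G}',\mathrm{Stab}_{G(\ell'[t])}(\sigma))$. I would attack it by realizing the stabilizer, via Bruhat--Tits theory, as the $\ell'[t]$-points of a smooth affine $\mathcal{O}$-group scheme whose reductive quotient carries enough structure for a Hilbert-90-style vanishing to apply; alternatively, and perhaps more efficiently, one can try to bypass passage to $\ell'$ altogether by establishing a $k$-rational Iwasawa decomposition $G(K) = G(k[t]) \cdot P(K) \cdot G(k[\![1/t]\!])$ for a minimal $k$-parabolic $P \supset S$, which would let the existence argument be carried out intrinsically over $k$ and reduce the descent to the (much easier) level of the torus $S$.
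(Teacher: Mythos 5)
The paper does not prove this statement; it is quoted as \cite[Theorem 2.1]{Marg}, and Margaux's proof (generalizing Soul\'e) is a direct analysis of the $G(k[t])$-action on the building via Bruhat and Iwasawa decompositions, not a Galois descent from the split case. So your proposal takes a genuinely different route — and unfortunately one with a real gap.

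Two pieces of your argument are sound: the identification $\mathscr{Q}=\mathscr{Q}_{\ell'}^{\mathscr{G}'}$ is consistent with the apartment descriptions in (\ref{E:apart}), and the deduction that $\tau$ is $\mathscr{G}'$-fixed (hence lies in $\mathscr{Q}$), together with the uniqueness part of the descent, follow correctly from the uniqueness clause of Soul\'e's theorem over $\ell'$. The problem is the existence half, and it is exactly where you stop. Replacing $h\in G(\ell'[t])$ by a $\mathscr{G}'$-fixed element of the coset $h\cdot\mathrm{Stab}_{G(\ell'[t])}(j(\sigma))$ is equivalent to the triviality of the cocycle $\gamma\mapsto h^{-1}\gamma(h)$ in $H^1\bigl(\mathscr{G}',\mathrm{Stab}_{G(\ell'[t])}(j(\sigma))\bigr)$, i.e.\ to the vanishing of the kernel of $H^1(\mathscr{G}',\mathrm{Stab})\to H^1(\mathscr{G}',G(\ell'[t]))$. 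Nothing forces this a priori.

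Your first remedy (a "Hilbert 90"-style vanishing for the stabilizer) is not available. By Proposition \ref{P:stab1}, the stabilizer of a point of $\mathscr{Q}_{\ell'}$ has the form $\mathscr{U}(\ell')\rtimes L_I(\ell')$ with $L_I$ a Levi subgroup, so $H^1(\mathscr{G}',\mathrm{Stab})$ maps onto $H^1(\mathscr{G}',L_I(\ell'))$, which is generally far from trivial. Indeed, the nontriviality of precisely these $H^1$'s is what makes Raghunathan--Ramanathan a substantive theorem; and note that the argument in \S\ref{S:RR}.3, which does pass through these stabilizers, does not prove any $H^1$ vanishes — it only shows that every class in $H^1(\mathscr{G},G(\ell[t]))$ comes from $H^1(\mathscr{G},G(\ell))$, a much weaker conclusion obtained via the Fixed Point Theorem for the action of $G(\ell[t])\rtimes\mathscr{G}$ on the building. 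Your second suggestion (a $k$-rational Iwasawa decomposition giving an intrinsic argument over $k$) is much closer in spirit to what Soul\'e and Margaux actually do, but you leave it unestablished. As written, the proposal does not close the existence step and therefore does not constitute a proof.
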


We note that in this paper, the uniqueness will be used only in the split case. It follows from the description that if $G$ is $k$-split, then $S = T$ and consequently ${\mathscr{A}}_{\ell} = \mathscr{A}$. Therefore, in the split case, every point of $\mathscr{B}_{\ell}$ is $G(\ell[t])$-equivalent to a point of $\mathscr{A}$.

Next, set $\Gamma = G(k[t])$. We will now recall the description of the stabilizers $\Gamma_x$ of points $x \in \mathscr{Q}$ obtained in \cite[\S 2]{Marg}.
Clearly, for $p^0 \in \mathscr{Q}$ we have $\Gamma_{p^0} = G(k)$. Let now $x \in \mathscr{Q} \setminus \{ p^0 \}$. Set
$$
I_x = \{ \alpha \in \Delta_r \ \vert \ \alpha(x) = 0 \}
$$
(we obviously have $I_{p^0} = \Delta$). As usual, for $I \subset \Delta_r$, we define
$$
S_I = \left( \bigcap_{\alpha \in I} \ker \alpha \right)^{\circ} \ \ \text{and} \ \  L_I = Z_G(S_I).
$$
Let $P_I$ be the standard parabolic subgroup of $G$ of type $I$ (cf. \cite[21.11]{Borel}). Then $P_I$ has $L_I$ as its (standard) Levi subgroup, i.e. $P_I$ is the semi-direct product $U_I \rtimes L_I$, where $U_I$ is the unipotent radical of $P_I$, which is a connected $k$-defined $k$-split unipotent subgroup of $P_I$. Recall that the root system $\Phi(L_I , S)$ coincides with $[I]$, the subsystem of $\Phi_r$ consisting of roots that are linear combinations of elements of $I$. Then $U_I$ is generated by the $U_{\alpha}$ for $\alpha \in \Phi_r^+ \setminus [I]$. (All these facts can be found in {\it loc. cit.}) The following result is a consequence of (\cite[Proposition 2.5(1)]{Marg}).
\begin{prop}\label{P:stab1}
We have
\begin{equation}\label{E:stab1}
\Gamma_x = (\Gamma_x \cap U_{I_x}(K)) \rtimes L_{I_x}(k),
\end{equation}
and $\Gamma_x \cap U_{I_x}(K)$ is the group of $k$-points of a $k$-split unipotent group.
\end{prop}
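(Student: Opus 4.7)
The plan is to derive this from \cite[Proposition 2.5(1)]{Marg} by combining the Iwahori-type decomposition of the parahoric $G(K)_x$ with respect to the parabolic $P_{I_x}$ and the elementary arithmetic identity
\[
k[t] \cap t^{-n}\,k[\![1/t]\!] = \begin{cases} \{\,p \in k[t] : \deg p \leq n\,\} & \text{if } n \geq 0, \\ 0 & \text{if } n < 0. \end{cases}
\]

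Set $y = x - p^0$. Since $x \in \mathscr{Q} \setminus \{p^0\}$, one has $\alpha(y) \geq 0$ for every $\alpha \in \Phi_r^+$ with equality precisely on $[I_x]$, and correspondingly $\alpha(y) \leq 0$ for $\alpha \in \Phi_r^-$ with equality again on $[I_x]$. By Bruhat--Tits theory applied to $K = k(\!(1/t)\!)$, the stabilizer $G(K)_x$ is generated, in an Iwahori-type decomposition, by the torus contribution $S(\cO_K)$ and root-group contributions $U_\alpha(t^{-\lfloor \alpha(y) \rfloor}\,\cO_K)$ for $\alpha \in \Phi_r$, where $\cO_K = k[\![1/t]\!]$ (with suitable bookkeeping for the passage from relative to absolute root groups in the non-split case, which is carried out in \cite{Marg}).

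Intersecting each piece with $\Gamma = G(k[t])$ using the arithmetic identity together with $S(\cO_K) \cap S(k[t]) = S(k)$, I would obtain: for $\alpha \in [I_x]$, the contribution $U_\alpha(k)$; for $\alpha \in \Phi_r^+ \setminus [I_x]$, a finite-dimensional $k$-subspace $U_\alpha(k[t]_{\leq \lfloor \alpha(y) \rfloor})$ sitting inside $U_{I_x}(K)$; and for $\alpha \in \Phi_r^- \setminus [I_x]$, the trivial subgroup. The vanishing of the contributions of the third kind is the crux: it forces $\Gamma_x \subseteq P_{I_x}(K)$, because any $\gamma \in G(K)_x$ admits an Iwahori-type factorization relative to $P_{I_x}$ and the $U_{I_x}^-$-component of an element of $\Gamma_x$ is assembled from these now-trivial pieces. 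Combined with the canonical decomposition $P_{I_x} = U_{I_x} \rtimes L_{I_x}$ and the fact that $L_{I_x}$ is generated by $S$ together with the $U_\alpha$ for $\alpha \in [I_x]$, this yields $\Gamma_x \cap L_{I_x}(K) = L_{I_x}(k)$ and hence
\[
\Gamma_x = (\Gamma_x \cap U_{I_x}(K)) \rtimes L_{I_x}(k).
\]

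To exhibit $H := \Gamma_x \cap U_{I_x}(K)$ as the group of $k$-points of a $k$-split unipotent $k$-group, I would use that $U_{I_x}$ is $k$-split with a $k$-rational central filtration whose successive quotients are products of the $U_\alpha$ for $\alpha \in \Phi_r^+ \setminus [I_x]$. Imposing on each root-group coordinate the polynomial-degree bound $\lfloor \alpha(y) \rfloor$ cuts out a $k$-defined closed subgroup of total $k$-dimension $\sum_\alpha (\lfloor \alpha(y) \rfloor + 1)$, whose $k$-points recover $H$; the commutator relations among the $U_\alpha$'s, being polynomial with $k$-coefficients, preserve this bounded-degree condition and yield a genuine $k$-split unipotent $k$-group structure.

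The main technical obstacle is justifying rigorously the root-group decomposition of $G(K)_x$ and the resulting inclusion $\Gamma_x \subseteq P_{I_x}(K)$ in the $k$-isotropic but non-split case; in the split case this essentially goes back to Soul\'e \cite{Soule}, while the general case is precisely the content of \cite[Proposition 2.5(1)]{Marg}, from which the present proposition is extracted.
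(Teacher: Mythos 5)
Your proposal follows the same overall strategy as the paper: the semi-direct product decomposition (\ref{E:stab1}) is taken from \cite[Proposition 2.5(1)]{Marg}, which you correctly identify as the technical core, and the remaining work is to exhibit $\Theta_x := \Gamma_x \cap U_{I_x}(K)$ as $k$-points of a $k$-split unipotent $k$-group. Where you diverge from the paper is in how the algebraic-group structure on $\Theta_x$ is produced. The paper first invokes the boundedness of point stabilizers (\cite[Axiom 4.1.2]{KaPr}) to choose $n$ with $\Theta_x \cap U_{I_x}(k[t],(t^n)) = 1$, so that $\Theta_x$ embeds into the finite-dimensional group $\mathrm{R}_{A_n/k}(U_{I_x})(k)$ with $A_n = k[t]/(t^n)$; it then uses the product decomposition of $\Theta_x$ along reduced roots (\cite[6.4.9]{BT1}) and Margaux's groups $\tilde{U}_{b_j,m}$ to reduce the identification to $G = \mathrm{SL}_2$. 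You instead cut out a $k$-subgroup of $U_{I_x}$ directly by imposing a degree bound on each relative-root-group coordinate and verify closure under multiplication via the subadditivity $\lfloor a\rfloor + \lfloor b\rfloor \le \lfloor a+b\rfloor$ in the commutator relations. Your construction is morally equivalent — unraveling $\mathrm{R}_{A_n/k}$ recovers precisely a bounded-degree description — and is cleaner when $G$ is split, since then each $U_{b_j} \simeq \mathbb{G}_a$ and the coordinate is a single polynomial. In the quasi-split and general isotropic cases, however, the relative root groups are higher-dimensional, can involve Weil restrictions from the splitting field, and may be non-abelian when the root system is non-reduced; there the restriction-of-scalars packaging and the reduction to the rank-one $\mathrm{SL}_2$ situation handle this bookkeeping systematically, whereas your direct description would require a more careful choice of coordinates on each $U_{b_j}$.

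Two small corrections. First, the sign in your displayed arithmetic identity is off: since $\cO_K = k[\![1/t]\!]$ consists of series in non-positive powers of $t$, the correct statement is $k[t]\cap t^{\,n}k[\![1/t]\!] = \{p\in k[t]: \deg p \le n\}$ for $n\ge 0$ (and $=0$ for $n<0$), so the root-group contributions should be written as $U_\alpha\bigl(t^{\lfloor \alpha(y)\rfloor}\cO_K\bigr)$ rather than with $t^{-\lfloor\alpha(y)\rfloor}$; your subsequent conclusions (positive roots outside $[I_x]$ contribute a bounded-degree polynomial space, negative roots outside $[I_x]$ contribute trivially) are nonetheless correct. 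Second, your appeal to an ``Iwahori-type decomposition'' of $G(K)_x$ with respect to $P_{I_x}$ together with the vanishing of negative-root contributions to deduce $\Gamma_x \subseteq P_{I_x}(K)$ is exactly what \cite[Proposition 2.5(1)]{Marg} establishes; presenting it as an independent derivation understates the amount of Bruhat--Tits theory (the $(B,N)$-pair axioms, the valued root datum axioms, descent of apartments) that is already built in, which you do acknowledge at the end.
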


We will only explain how one can identify the group $\Theta_x := \Gamma_x \cap U_{I_x}(K)$  with the group $\mathscr{U}(k)$ of $k$-points of a $k$-defined split (unipotent) subgroup $\mathscr{U}$ of the restriction of scalars $\mathrm{R}_{A_n/k}(U_{I_x})$  for a sufficiently large positive integer $n$ where $A_n = k[t]/(t^n)$. It is one of the cornerstones of Bruhat--Tits theory that the stabilizers in $G(K)$ of points of the building are open bounded subgroups (cf. \cite[Axiom 4.1.2]{KaPr}) with respect to the topology associated with the valuation, in our case, with the valuation associated with $t^{-1}$. Thus, the subgroup $\Theta_x \subset U_{I_x}(k[t])$ is bounded, and therefore there exists an integer $n > 0$ such that $\Theta_x$ has trivial intersection with the congruence subgroup $U_{I_x}(k[t] , (t^n))$. Then $\Theta_x$ is identified with a subgroup of
$$
U_{I_x}(k[t]) / U_{I_x}(k[t] , (t^n)) \simeq U_{I_x}(A_n) \simeq \mathrm{R}_{A_n/k}(U_{I_x})(k),
$$
and in fact we have a similar identification when $n$ is replaced by any $n' \geq n$. Further analysis based on information about the structure of $\Theta_x$ that is developed in \cite[\S 2]{Marg} using the results of \cite{BT1} shows that the image of $\Theta_x$ coincides with the group of $k$-points of a $k$-defined subgroup $\mathscr{U}$ of $\mathrm{R}_{A_n/k}(U_{I_x})$ for a sufficiently large $n$. More precisely, let $\{b_1, \ldots , b_m \}$ be the set of reduced roots in $\Phi_r^+ \setminus [I_x]$. Then the product map $\prod_{j = 1}^m U_{b_j} \to U_{I_x}$ is an isomorphism of $k$-varieties \cite[Proposition 21.9]{Borel}, yielding an identification of $\prod_{j = 1}^m \mathrm{R}_{A_n/k}(U_{b_j})$ with $\mathrm{R}_{A_n/k}(U_{I_x})$. On the other hand, it follows from \cite[section 6.4.9]{BT1} that the product map $\prod_{j = 1}^n (\Gamma_x \cap U_{b_j}(K)) \to \Theta_x$ is a bijection. Thus, it is enough to prove that for a sufficiently large $n$, the subgroup $\Theta_{x,j} :=\Gamma_x \cap U_{b_j}(K)$ gets identified with the group of $k$-points of a $k$-subgroup $\mathscr{U}_j$ of $\mathrm{R}_{A_n/k}(U_{b_j})$. But $\Theta_{x,j}$ coincides with the intersection $\Gamma \cap \tilde{U}_{b_j , m}$ for some integer $m$, where $\tilde{U}_{b_j , m}$ is introduced on p.~395 of \cite{Marg}, and it is enough to show that the intersection $G(\ell[t]) \cap \tilde{U}_{b_j , m}$ gets identified with the group of $\ell$-points of a closed subgroup of $\mathrm{R}_{A_n/k}(U_{b_j})$ for a sufficiently large $n$. The decomposition defining $\tilde{U}_{b_j , m}$ in {\it loc. cit.} reduces this question to $G = \mathrm{SL}_2$, where it is well-known and easy to check (cf. \cite[Ch. II, \S 1]{Serre-Trees}).

Recall that for subsets $I , J \subset \Delta_r$, we clearly have $L_I \cap L_J = L_{I \cap J}$. So, we deduce from the proposition that given a subset $\Omega \subset \mathscr{Q}$, the pointwise stabilizer $\Gamma_{\Omega} := \bigcap_{x \in \Omega} \Gamma_x$ has the following description:
\begin{equation}\label{E:stab2}
\Gamma_{\Omega} = (\Gamma_{\Omega} \cap U_{\Omega}(K)) \rtimes L_{I_{\Omega}}(k), \ \ \text{where} \ \ U_{\Omega} = \bigcap_{x \in \Omega} U_{I_x} \ \ \text{and} \ \     I_{\Omega} = \bigcap_{x \in \Omega} I_x.
\end{equation}
Using the procedure we applied above to identify $\Theta_x$ with the group of $k$-points of a $k$-defined split unipotent subgroup,
we obtain the following.
\begin{cor}\label{C:stab1}
For any subset $\Omega \subset \mathscr{Q}$, the pointwise stabilizer $\Gamma_{\Omega}$ is of the form $\mathscr{U}(k) \rtimes L_{I_{\Omega}}(k)$, where $\mathscr{U}$ is a $k$-defined $k$-split unipotent group.
\end{cor}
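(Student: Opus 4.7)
Equation \eqref{E:stab2} already exhibits $\Gamma_\Omega$ as the semidirect product $\Theta_\Omega \rtimes L_{I_\Omega}(k)$ with $\Theta_\Omega := \Gamma_\Omega \cap U_\Omega(K)$, so the only task left is to realize $\Theta_\Omega$ as the group of $k$-points of a $k$-defined $k$-split unipotent group $\mathscr{U}$. My plan is to run the identification of the unipotent piece $\Theta_x$ of a single-point stabilizer, recalled in the excerpt just before the corollary, uniformly for all $x \in \Omega$.

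Let $\{b_1, \ldots, b_m\}$ be the reduced roots in $\Phi_r^+$ for which $U_{b_j} \subseteq U_\Omega$, so that the product map $\prod_j U_{b_j} \to U_\Omega$ is an isomorphism of $k$-varieties. For each $x \in \Omega$ the Bruhat--Tits product decomposition of $\Theta_x$ from \cite[\S 6.4.9]{BT1} restricts, by the uniqueness of root-space factorizations inside $U_\Omega$, to an equality $\Theta_x \cap U_\Omega(K) = \prod_{j=1}^m (\Gamma_x \cap U_{b_j}(K))$. Intersecting over $x$ then yields
$$
\Theta_\Omega \;=\; \prod_{j=1}^m \Theta_{\Omega, b_j}, \qquad \Theta_{\Omega, b_j} := \Gamma_\Omega \cap U_{b_j}(K),
$$
so it suffices to realize each factor $\Theta_{\Omega, b_j}$ as the $k$-points of a $k$-split unipotent group and to take the product.

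For each $b_j$ this reduces, exactly as in the excerpt, to the case $G = \mathrm{SL}_2$. Under $U_{b_j} \cong \mathbb{G}_a$, each $\Gamma_x \cap U_{b_j}(K)$ corresponds to a subspace of the form $\{p(t) \in k[t] : \deg p \le d_j(x)\}$ with $d_j(x) \in \mathbb{Z}_{\ge 0}$ controlled by the Bruhat--Tits filtration at $x$, so $\Theta_{\Omega, b_j}$ is again of this shape with $d_j := \inf_{x \in \Omega} d_j(x) \in \mathbb{Z}_{\ge 0} \cup \{-\infty\}$; in either case it is the group of $k$-points of a $k$-split unipotent subgroup of $\mathrm{R}_{A_n/k}(U_{b_j})$ for any $n > d_j$. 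The main delicate point, absent in the single-$x$ case, is the compatibility of the decompositions $\Theta_x = \prod_{b \in J_x}(\Gamma_x \cap U_b(K))$ for varying $x$: the indexing set $J_x \subseteq \Phi_r^+ \setminus [I_x]$ depends on $x$ and is generally larger than $\{b_1, \ldots, b_m\}$, so the extra factors must be shown to be trivial on elements of $U_\Omega(K)$. This is precisely what the uniqueness of factorization inside the smaller group $U_\Omega$ resolves, after which the remaining analysis is the single-point calculation already carried out in the discussion following Proposition~\ref{P:stab1}.
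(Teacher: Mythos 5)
Your proposal is correct and runs the same route the paper intends when it says ``using the procedure we applied above'' --- namely, decompose the unipotent part into root subgroups, reduce each factor to $\mathrm{SL}_2$, and recognize the result as the $k$-points of a $k$-split unipotent subgroup of a Weil restriction $\mathrm{R}_{A_n/k}(U_\Omega)$. The one thing you add beyond the paper's one-sentence proof, and it is a genuinely useful observation, is to flag explicitly that the Bruhat--Tits indexing set $J_x \subseteq \Phi_r^+\setminus[I_x]$ varies with $x$, so that passing from the product decomposition of each $\Theta_x$ (over $J_x$) to a product decomposition of $\Theta_\Omega$ (over the common reduced roots $\{b_1,\dots,b_m\}$ spanning $U_\Omega$) requires knowing that the extra root components vanish on elements of $U_\Omega(K)$; you resolve this correctly by uniqueness of the root-group factorization after ordering $J_x$ so that $\{b_1,\dots,b_m\}$ comes first, which is legitimate since the isomorphism $\prod_b U_b\to U_{I_x}$ of \cite[Prop.~21.9]{Borel} holds for any ordering. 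Two small points you could tighten: the equality $U_\Omega = \prod_j U_{b_j}$ (your ``so that'') deserves the same one-line justification via compatible orderings, and the description $\Gamma_x\cap U_{b_j}(K)\leftrightarrow\{p\in k[t]:\deg p\le d_j(x)\}$ is the picture \emph{after} the reduction to $\mathrm{SL}_2$ via the groups $\tilde U_{b_j,m}$ of \cite{Marg}, since the relative root group $U_{b_j}$ itself need not be $\mathbb{G}_a$ in the non-split case.
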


\vskip.5mm

\noindent {\bf 2.2. A result concerning $G(k[t])$.} In order to extend some results from subsection 2.1 to arbitrary reductive groups, we need the following fact.
\begin{thm}\label{T:Stavr1}
Let $G$ be reductive algebraic $k$-group. Then
$$
G(k[t]) = G(k) \cdot G(k[t])^+,
$$
where $G(k[t])^+$ is the subgroup of $G(k[t])$ generated by the subgroups $U_P(k[t])$ for all minimal $k$-defined parabolic subgroups $P$ of $G$, with $U_P$ being the unipotent radical of $P$.
\end{thm}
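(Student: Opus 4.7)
The plan is to apply Margaux's fundamental domain theorem (Theorem~\ref{T:Marg}) together with the stabilizer description in Corollary~\ref{C:stab1}, showing that $\Gamma := G(k[t])$ is generated by subgroups all contained in $H := G(k) \cdot G(k[t])^+$. I first observe that $H$ is a subgroup of $\Gamma$: conjugation by $g \in G(k)$ permutes the minimal $k$-parabolics, sending $U_P(k[t])$ to $U_{gPg^{-1}}(k[t]) \subset G(k[t])^+$, so $G(k)$ normalises $G(k[t])^+$.

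I would then reduce to the case where $G$ is absolutely almost simple simply connected, so that subsection~2.1 applies directly. Since $k[t]^\times = k^\times$, every $k$-torus $T$ satisfies $T(k[t]) = T(k)$; combining this with the quotient map $G \to G/[G,G]$ to a torus and evaluation at $t = 0$ yields $G(k[t]) = G(k) \cdot [G,G](k[t])$, reducing to the semisimple case. For semisimple $G$, the simply connected cover $\pi \colon G^{\mathrm{sc}} \to G$ has multiplicative-type kernel $\mu$ with $H^1(k[t],\mu) = H^1(k,\mu)$ by homotopy invariance; another evaluation-at-$0$ argument gives $G(k[t]) = G(k) \cdot \pi(G^{\mathrm{sc}}(k[t]))$, and since $\pi$ restricts to an isomorphism between unipotent radicals of corresponding parabolics, $\pi(G^{\mathrm{sc}}(k[t])^+) = G(k[t])^+$. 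Finally, a simply connected semisimple group decomposes as a product of Weil restrictions of absolutely almost simple simply connected groups, completing the reduction. If $G$ is $k$-anisotropic in this setting, there are no proper $k$-parabolics, so $G(k[t])^+ = 1$ and Proposition~\ref{P:anis} yields $G(k[t]) = G(k) = H$.

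Assume now that $G$ is $k$-isotropic. By Theorem~\ref{T:Marg}, $\mathscr{Q}$ is a strict simplicial fundamental domain for $\Gamma$ acting on the contractible building $\mathscr{B}$. The standard principle---a higher-dimensional analogue of Bass--Serre theory---that a group acting on a simply connected simplicial complex with a strict fundamental domain is generated by the pointwise stabilizers of simplices in that domain implies $\Gamma = \langle \Gamma_\sigma : \sigma \subset \mathscr{Q}\rangle$. It therefore suffices to show each $\Gamma_\sigma \subset H$. By Corollary~\ref{C:stab1}, $\Gamma_\sigma = \mathscr{U}(k) \rtimes L_{I_\sigma}(k)$ with $L_{I_\sigma}(k) \subset G(k) \subset H$; choosing any minimal $k$-parabolic $Q$ contained in $P_{I_\sigma}$, we have $U_{I_\sigma} \subset U_Q$, and the identification recalled in subsection~2.1 places $\mathscr{U}(k) \subset U_{I_\sigma}(k[t]) \subset U_Q(k[t]) \subset G(k[t])^+ \subset H$. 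Hence every $\Gamma_\sigma \subset H$, and therefore $\Gamma = H$.

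The main obstacle I anticipate is the chain of reductions to the absolutely almost simple simply connected case, where one must carefully verify that the formation of $G(k[t])^+$ is compatible with quotients by central tori, central isogenies, and Weil restrictions. The generation of $\Gamma$ by simplex stabilizers is a clean consequence of the strictness of $\mathscr{Q}$ as a fundamental domain (Theorem~\ref{T:Marg}) and the contractibility of $\mathscr{B}$, but it relies on a standard theorem from the theory of group actions on buildings that deserves an explicit reference.
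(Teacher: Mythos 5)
Your proof is correct, but it takes a more self-contained route than the paper, whose argument is essentially a two-line reduction: write $G$ as an almost direct product $G_1\cdot G_2$ with every semi-simple normal $k$-subgroup of $G_1$ being $k$-isotropic and $G_2$ $k$-anisotropic, pass to the isogenous $\overline{G}_1\times\overline{G}_2$, cite Stavrova's Theorem~3.1 from \cite{Stav} for $\overline{G}_1$ and Proposition~\ref{P:anis} for $\overline{G}_2$, and conclude using $\theta^{-1}(\overline{G}_1(k)\times\overline{G}_2(k))\cap G(k[t])\subset G(\bar{k})\cap G(k[t])=G(k)$ since $\ker\theta$ is finite. Your chain of reductions --- to semisimple via evaluation at $t=0$ and $T(k[t])=T(k)$, then to simply connected via the central isogeny and homotopy invariance of $H^1(\cdot,\mu)$, then to absolutely almost simple via Weil restriction --- is essentially what Stavrova's own reduction does, and your handling of the isotropic simple simply connected case via Theorem~\ref{T:Marg}, Corollary~\ref{C:stab1}, and the generation-by-simplex-stabilizers principle reproves the result of Margaux (ultimately Soul\'e) rather than citing it. So the two proofs have the same mathematical content; the paper gains brevity by citing, while you gain transparency at the cost of several extra compatibility checks for $G(k[t])^+$ under quotients, isogenies, and Weil restriction. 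Two small points worth tightening: for a non-split $k$-torus $T$, the equality $T(k[t])=T(k)$ requires passing to a splitting field as in the proof of Proposition~\ref{P:anis}, not just the observation $k[t]^\times=k^\times$; and for generation by simplex stabilizers you only need gallery-connectedness of $\mathscr{B}$ together with the uniqueness clause of Theorem~\ref{T:Marg} and the type-preserving nature of the action (automatic for $\widetilde{G}(K)$, cf.\ \S 2.5), so contractibility, while true, is not what this step actually uses.
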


This was established in \cite[Theorem 3.1]{Stav} under the additional assumption that every normal semi-simple $k$-subgroup of $[G , G]$ is $k$-isotropic. The argument in {\it loc. cit.} gives a reduction to the case where $G$ is semi-simple and simply connected, which  was considered in \cite{Marg} by generalizing the techniques introduced in \cite{Soule}. To handle the general case in the theorem, we will need the fact that if $G$ is $k$-anisotropic, then $G(k[t]) = G(k)$. In fact, we have the following more general statement.
\begin{prop}\label{P:anis}
Let $\tilde{C}$ be a smooth absolutely irreducible projective curve over a field $k$, let $P \in \tilde{C}(k)$ be a $k$-rational point, and $C = \tilde{C} \setminus \{ P \}$ be the corresponding affine curve. Then for any connected reductive algebraic $k$-group $G$ whose semi-simple part $H = [G , G]$ is $k$-anisotropic, we have $G(k[C]) = G(k)$.
\end{prop}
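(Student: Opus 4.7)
The plan is to prove local integrality at the missing point: for $g \in G(k[C]) \subset G(F)$ with $F = k(C)$, I would show that $g$ in fact lies in $G(\cO_P)$, where $\cO_P$ denotes the local ring of $\tilde{C}$ at $P$. Once this is done, the conclusion is formal, since
$$
G(k[C]) \cap G(\cO_P) \; = \; G(k[C] \cap \cO_P) \; = \; G(H^0(\tilde{C}, \cO_{\tilde{C}})) \; = \; G(k),
$$
using that $\tilde{C}$ is projective and absolutely irreducible, so $H^0(\tilde{C}, \cO_{\tilde{C}}) = k$.

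The preliminary step handles the cocenter of $G$. The key observation is that $k[C]^{*} = k^{*}$: any unit on $C$ is a rational function on $\tilde{C}$ whose divisor is supported at $P$, hence of the form $nP$ for some $n \in \Z$; since $\deg(P) = 1$ and $\deg(\mathrm{div}(u)) = 0$, one gets $n = 0$, so $u \in H^0(\tilde{C}, \cO_{\tilde{C}})^{*} = k^{*}$. By Galois descent from a finite splitting extension, this yields $T(k[C]) = T(k)$ for every $k$-torus $T$. Applying this to the quotient torus $T := G/H$ (whose existence uses that $H = [G,G]$ is normal), the image $\pi(g)$ of any $g \in G(k[C])$ under the natural projection $\pi \colon G \to T$ is a constant $t_0 \in T(k) \subset T(\cO_P)$.

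To establish $g \in G(\cO_P)$, I would pass to the completion $F_P = k(\!(u)\!)$ at $P$, with ring of integers $\cO_P = k[\![u]\!]$ (the isomorphism uses that $P$ is $k$-rational). The anisotropy hypothesis then feeds in through two standard Bruhat--Tits and unramified-cohomology inputs for the smooth reductive $\cO_P$-model $\mathcal{H} := H \times_k \cO_P$. First, since $k$ is algebraically closed in $F_P$, the $F_P$-rank of $H_{F_P}$ equals the $k$-rank of the special fiber $H$, so $H_{F_P}$ is $F_P$-anisotropic; hence its Bruhat--Tits building is a point and $H(F_P) = \mathcal{H}(\cO_P) = H(\cO_P)$. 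Second, by the Nisnevich-type injectivity $H^1(\cO_P, H) \hookrightarrow H^1(F_P, H)$ for reductive group schemes over a Henselian discrete valuation ring, the obstruction class of $t_0 \in T(\cO_P)$ in $H^1(\cO_P, H)$ vanishes --- indeed, its image in $H^1(F_P, H)$ is trivial because $g \in G(F_P)$ already lifts $t_0$. So there exists $g_0 \in G(\cO_P)$ with $\pi(g_0) = t_0$. Then $h := g_0^{-1} g$ lies in the kernel $H(F_P) = H(\cO_P)$, whence $g = g_0 h \in G(\cO_P)$, completing the local step.

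The main obstacle is a clean justification of the two analytic inputs --- the equality $H(F_P) = H(\cO_P)$ for an anisotropic reductive group with good integral model, and the injectivity of $H^1(\cO_P, H) \to H^1(F_P, H)$. Both are by now standard consequences of Bruhat--Tits theory and the theory of torsors under smooth reductive group schemes over Henselian discrete valuation rings, but they deserve careful attribution. Everything else in the argument is formal, relying only on the divisor-theoretic identity $k[C]^{*} = k^{*}$ together with the exact sequence $1 \to H \to G \to T \to 1$.
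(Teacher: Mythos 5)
Your overall strategy --- establish $G(k[C]) \subset G(\mathcal{O}_P)$ and conclude from $k[C] \cap \mathcal{O}_P = k$ --- is the same as the paper's, and the torus computation $k[C]^{\times} = k^{\times} \Rightarrow T(k[C]) = T(k)$ also agrees. But one step is unjustified as written, and the remaining steps rely on heavier machinery than the paper needs.

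The gap: you assert that ``$k$ algebraically closed in $F_P$'' forces the $F_P$-rank of $H_{F_P}$ to equal the $k$-rank of $H$. That inference is false in general. For instance, let $k = \mathbb{Q}$, let $q$ be an anisotropic five-variable quadratic form over $\mathbb{Q}$, and let $F$ be the function field of the projective quadric $\{q = 0\}$; then $\mathbb{Q}$ is algebraically closed in $F$, but $q$ becomes isotropic over $F$, so $H = \mathrm{SO}(q)$ passes from $\mathbb{Q}$-anisotropic to $F$-isotropic. The correct input --- which the paper supplies --- is that $F_P$ is the fraction field of a complete DVR $\mathcal{O}_P$ with residue field $k$, so the properness of the scheme of parabolic subgroups and the valuative criterion convert an $F_P$-parabolic into an $\mathcal{O}_P$-point of that scheme and then, by reduction, into a $k$-parabolic. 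The condition on algebraic closures is strictly weaker and cannot substitute for this.

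Your route also differs structurally, trading elementary arguments for deeper ones. To handle the cocenter you pass through $1 \to H \to G \to T \to 1$ and invoke the Nisnevich/Grothendieck--Serre injectivity $H^1(\mathcal{O}_P, H) \hookrightarrow H^1(F_P, H)$ to lift $t_0$ to $G(\mathcal{O}_P)$; the paper instead uses a central $k$-isogeny $G \to \bar{T} \times \bar{H}$ onto a direct product, after which $\bar{T}(k[C]) = \bar{T}(k)$, $\bar{H}(k[C]) = \bar{H}(k)$, finiteness of the kernel, and $k[C] \cap \bar{k} = k$ give $G(k[C]) = G(k)$ with no torsor lifting at all. Likewise, you derive $H(F_P) = H(\mathcal{O}_P)$ from Bruhat--Tits structure theory (the building of an anisotropic group is a point whose unique vertex is hyperspecial because $H \times_k \mathcal{O}_P$ is reductive); the paper uses only boundedness of $H(F_P)$ together with a short self-contained trick: if some matrix entry of $g$ had negative valuation, applying the $k$-endomorphisms $t \mapsto t^m$ of $k(\!(t)\!)$ (which preserve $G(F_P)$ because $G$ is defined over $k$) would produce an unbounded sequence in $G(F_P)$, a contradiction. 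Once the rank-stability step is repaired as above, your argument does go through, but it invokes two substantial black boxes (Grothendieck--Serre and the hyperspecial-vertex identification) that the paper's approach avoids.
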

\begin{proof}
The claim is almost immediate if $G = T$ is a torus. Indeed, let $\ell$ be the splitting field of $T$. Then $\ell[C]^{\times} = \ell^{\times}$, and consequently
$$
T(k[C]) \subset T(\ell[C]) = T(\ell).
$$
So, $T(k[C]) \subset T(k[C] \cap \ell) = T(k)$.

We will reduce the proof to the case of a semi-simple $k$-anisotropic group. There is a central $k$-defined isogeny $\pi \colon G \to \bar{T} \times \bar{H}$ to the direct product of a torus and a semi-simple group. Clearly, $\pi$ yields a central isogeny $H \to \bar{H}$, so $\bar{H}$ is $k$-anisotropic. Assuming that the assertion of the proposition is valid for semi-simple $k$-anisotropic groups, we will have
$$
\pi(G(k[C])) \subset \bar{T}(k[C]) \times \bar{H}(k[C]) = \bar{T}(k) \times \bar{H}(k).
$$
Since $\ker \pi \subset G(\bar{k})$, we obtain $G(k[C]) \subset G(k[C] \cap \bar{k}) = G(k)$ as $k[C] \cap \bar{k} = k$ due to the fact that $C$ is absolutely irreducible (cf. \cite[Proposition 5.50]{GW}).

Now we will treat the main case where $G$ is a semi-simple $k$-anisotropic group. Let $K = k(C)$, and set $\mathcal{O}_P$  and $v = v_P$ to be the local ring of $P$ and the discrete valuation of $K$ associated with the point $P$, respectively. Then the completion $K_v$ can (and will) be identified with the field of formal power series $k(\!(t)\!)$, and $G$ remains $K_v$-anisotropic.

To see the latter, we recall that  the proper parabolic subgroups of $G$ are parametrized by a $k$-scheme $\mathscr{P}$ which is proper over $k$ (cf. \cite[\'{e}xpose XXVI, cor. 3.5 and 3.6]{SGA3}). The assumption that $G$ becomes isotropic over $K_v$ would mean that  $\mathscr{P}$ has a $K_v$-point. Then by the valuative criterion for properness (see, for
example, \cite[Theorem 15.9]{GW}) the scheme $\mathscr{P}$ would have a point over the valuation ring $\mathcal{O}_v$ of $K_v$. The reduction of this point would give us a point of $\mathscr{P}$ over the residue field, which in our case coincides with $k$. Thus, $\mathscr{P}(k) \neq \emptyset$, and hence $G$ has a proper $k$-defined parabolic. This means that $G$ is $k$-isotropic, which is not the case.

Next, fix a faithful $k$-defined representation $G \hookrightarrow \mathrm{GL}_n$. The fact that $G$ is $K_v$-anisotropic implies that $G(K_v)$ is a bounded subgroup of $\mathrm{GL}_n(K_v)$ (see \cite[Theorem 2.2.9]{KaPr}). We claim that in fact
\begin{equation}\label{E:IntegralX}
G(K_v) = G(\mathcal{O}_v),
\end{equation}
where $\mathcal{O}_v$ is the valuation ring of $K_v$. Indeed, suppose that $g = (g_{ij}) \in G(K_v) \setminus G(\mathcal{O}_v)$. Then for some indices $i_0 , j_0 \in \{1, \ldots , n\}$ we have $v(g_{i_0j_0}) < 0$. For each $m = 1, 2, \ldots$, we can consider the $k$-algebra homomorphism $\varphi_m \colon k(\!(t)\!) \to k(\!(t)\!)$ defined by sending $t$ to $t^m$. Since $G$ is defined over $k$, we have $(\varphi_m(g_{ij})) \in G(K_v)$ for all $m$. But the sequence $\varphi_m(g_{i_0j_0})$, $m = 1, 2, \ldots$, is unbounded, contradicting the boundedness of $G(K_v)$, hence proving (\ref{E:IntegralX}). Then
$$
G(k[C]) = G(k[C]) \cap G(\mathcal{O}_v) = G(k[C] \cap \mathcal{O}_P) = G(k[\tilde{C}]) = G(k),
$$
as required.
\end{proof}

\vskip2mm

\noindent {\bf Remarks.} 1. The statement of Proposition \ref{P:anis} in an earlier version of this paper included the assumption that $k$ has characteristic zero. This assumption was used in the proof to argue, using nilpotent elements in the Lie algebra, that $G$ remains $K_v$-anisotropic. Subsequently, Gopal Prasad showed us a justification of this fact over fields of any characteristic in the context of Bruhat--Tits theory as developed in \cite{KaPr}.
The current argument was proposed by the referee.

2. If a connected reductive $k$-group $G$ is $k$-anisotropic, then the same argument yields that $G(k[C]) = G(k)$ for any affine curve of the form $C = \tilde{C} \setminus \{P_1, \ldots , P_r\}$, where $P_1, \ldots , P_r \in \tilde{C}(k)$.

\vskip2mm

{\it Proof of Theorem \ref{T:Stavr1}.} We can write $G$ as an almost direct product $G = G_1 \cdot G_2$, where $G_1$ has the property that every semi-simple normal $k$-subgroup is $k$-isotropic and $G_2$ is $k$-anisotropic. Let $E = G_1 \cap G_2$, and set $\overline{G}_i = G_i/E$ for $i = 1, 2$ so that we have a $k$-isogeny
$$
\theta \colon G \to \overline{G}_1 \times \overline{G}_2,
$$
with $\ker \theta = E$. We have $\overline{G}_1(k[t]) = \overline{G}_1(k) \cdot \overline{G}_1(k[t])^+$ by \cite[Theorem 3.1]{Stav} and $\overline{G}_2(k[t]) = \overline{G}_2(k)$ by Proposition \ref{P:anis}. Since $\theta(G_1(k[t])^+) = \overline{G}_1(k[t])^+$, we see that
$$
G(k[t]) = (\theta^{-1}(\overline{G}_1(k) \times \overline{G}_2(k)) \cap G(k[t])) \cdot G(k[t])^+.
$$
However, $\theta^{-1}(\overline{G}_1(k) \times \overline{G}_2(k)) \subset G(\bar{k})$, and since $G(\bar{k}) \cap G(k[t]) = G(k)$, our claim follows. $\Box$

\vskip3mm

Now suppose that $G$ is a reductive $k$-group, and let $S$ be a maximal $k$-split torus, and $M = Z_G(S)$ be its centralizer. Furthermore, let $Z$ be the central torus (i.e. the connected center $Z(G)^{\circ}$) of $G$, let $H = [G , G]$ be the semi-simple part of $G$, and let $\pi_0 \colon \widetilde{H} \to H$ be a $k$-defined universal cover. Set $\widetilde{G} = \widetilde{H} \times Z$ and denote by $\pi \colon \widetilde{G} \to G$ the composition of the morphism $\widetilde{G} \to H \times Z$ induced by $\pi_0$ followed by the product map.

\begin{cor}\label{L:isog}
With the preceding notations, we have $G(k[t]) = M(k) \cdot \pi(\widetilde{G}(k[t]))$.
\end{cor}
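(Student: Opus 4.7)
The plan is to derive this from Theorem \ref{T:Stavr1} by combining two observations: the unipotent radicals of minimal $k$-parabolic subgroups of $G$ lift uniquely into $\widetilde{G}$, and a relative Bruhat-type decomposition allows one to reduce $G(k)$ to $M(k)$ modulo contributions from root subgroups.

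First, I would fix a minimal $k$-parabolic $P$ of $G$ with unipotent radical $U_P$. The kernel of $\pi \colon \widetilde{G} \to G$ is a central subgroup of $\widetilde{G}$, hence of multiplicative type; since $U_P$ is connected unipotent, any multiplicative-type subgroup of its preimage in $\widetilde{G}$ is trivial. Therefore $\pi$ restricts to a $k$-isomorphism from the (unique) lift $\widetilde{U}_P \subset \widetilde{G}$ onto $U_P$, so $U_P(k[t]) = \pi(\widetilde{U}_P(k[t])) \subset \pi(\widetilde{G}(k[t]))$. Since $G(k[t])^+$ is by definition generated by such subgroups, we have $G(k[t])^+ \subset \pi(\widetilde{G}(k[t]))$, and hence by Theorem \ref{T:Stavr1},
$$
G(k[t]) = G(k) \cdot G(k[t])^+ \subset G(k) \cdot \pi(\widetilde{G}(k[t])).
$$

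Next, I would show $G(k) \subset M(k) \cdot \pi(\widetilde{G}(k))$. If $G$ is $k$-anisotropic then $S$ is trivial, $M = G$, and the claim is immediate, so assume $G$ is $k$-isotropic. The relative Bruhat decomposition then gives $G(k) = \bigcup_{w} P(k) \cdot n(w) \cdot P(k)$, where $n(w) \in N_G(S)(k)$ is a lift of an element $w$ of the relative Weyl group. By the Borel--Tits structure theory, each simple reflection $s_\alpha$ (for $\alpha \in \Delta_r$) admits a representative in $N_G(S)(k)$ lying in the subgroup generated by the root subgroups $U_\alpha(k)$ and $U_{-\alpha}(k)$. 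Choosing $n(w)$ to be a product of such representatives places $n(w)$ in $\pi(\widetilde{G}(k))$, since every root subgroup lifts to $\widetilde{G}$ as in the previous paragraph. Now given $g \in G(k)$, write $g = (m_1 u_1) \cdot n(w) \cdot (m_2 u_2)$ with $m_i \in M(k)$ and $u_i \in U_P(k)$; using that $n(w)$ normalizes $M = Z_G(S)$ and that $M$ normalizes $U_P$, collect all $M$-factors on the left to obtain $g = m \cdot (u'_1 \cdot n(w) \cdot u_2)$ for some $m \in M(k)$ and $u'_1 \in U_P(k)$, where the parenthesized factor lies in $\pi(\widetilde{G}(k))$. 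Hence $g \in M(k) \cdot \pi(\widetilde{G}(k))$.

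Combining the two steps yields $G(k[t]) \subset M(k) \cdot \pi(\widetilde{G}(k)) \cdot \pi(\widetilde{G}(k[t])) = M(k) \cdot \pi(\widetilde{G}(k[t]))$, and the reverse inclusion is clear. The principal delicate point is ensuring that the Weyl-group representatives $n(w)$ may be chosen within $\pi(\widetilde{G}(k))$; this relies on the classical description of the relative Weyl group via products of elements of unipotent relative root subgroups in Borel--Tits theory of reductive groups over an arbitrary field, but once granted, the rest of the argument is routine rearrangement.
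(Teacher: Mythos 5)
Your proposal is correct and follows essentially the same architecture as the paper's proof: start from Theorem~\ref{T:Stavr1} to write $G(k[t]) = G(k)\cdot G(k[t])^+$, check that $G(k[t])^+ \subset \pi(\widetilde{G}(k[t]))$, and then control $G(k)$ modulo $M(k)$ by the subgroup generated by $k$-rational unipotents. The one place you diverge is in the last step: the paper simply cites Borel--Tits \cite[Proposition~6.11]{BorelTits} for the equality $G(k) = M(k)\cdot G(k)^+$, while you reprove this from the relative Bruhat decomposition, using that representatives of the simple reflections in the relative Weyl group can be chosen inside $\langle U_{\alpha}(k), U_{-\alpha}(k)\rangle$ and then collecting the $M(k)$-factors. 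That rearrangement argument is fine (since $N_G(S)$ normalizes $M=Z_G(S)$ and $M$ normalizes $U_P$), and your verification that $U_P$ lifts isomorphically to $\widetilde{G}$ (a connected unipotent group meets the multiplicative-type kernel trivially) is a correct justification of what the paper treats as immediate. In short, you have essentially reproved the Borel--Tits decomposition of $G(k)$ from scratch where a citation would suffice; the content and conclusion agree with the paper.
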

\begin{proof}
According to Theorem \ref{T:Stavr1}, we have $G(k[t]) = G(k) \cdot G(k[t])^+$. On the other hand, $G(k) = M(k) \cdot G(k)^+$ by \cite[Proposition 6.11]{BorelTits}. So,
$$
G(k[t]) = M(k) \cdot G(k[t])^+.
$$
Since $G(k[t])^+ \subset \pi(\widetilde{G}(k[t]))$, we obtain our claim.
\end{proof}

\vskip1mm

\noindent {\bf 2.3. The case of $G$ simple, but not necessarily simply connected.} Let $G$ be an absolutely almost simple, but not necessarily simply connected, algebraic $k$-group, and let $\pi \colon \widetilde{G} \to G$ be a $k$-defined universal cover. It is well known that the Bruhat--Tits buildings associated with $\widetilde{G}$ and $G$ over $K$ are canonically isomorphic (cf. \cite[Remark 7.6.2, last paragraph]{KaPr}). We will denote this common building by $\mathscr{B}$. Then the canonical action of $G(K)$ on $\mathscr{B}$ composed with the group homomorphism $\pi(K) \colon \widetilde{G}(K) \to G(K)$ yields the canonical action of $\widetilde{G}(K)$. We will use $\widetilde{\ }$ to denote the objects associated with $\widetilde{G}$, dropping the tilde to denote the objects associated with $G$; in particular, we set $\widetilde{\Gamma} = \widetilde{G}(k[t])$ and $\Gamma = G(k[t])$. We fix a maximal $k$-split torus $\widetilde{S}$ of $\widetilde{G}$; then $S := \pi(\widetilde{S})$ is a maximal $k$-split torus of $G$. We let  $\mathscr{A} \subset \mathscr{B}$ denote the apartment constructed above for $\widetilde{S}$, and let $\mathscr{Q} \subset \mathscr{A}$ be the corresponding sector. We claim that Theorem \ref{T:Marg}, proved for the group $\widetilde{\Gamma}$, remains valid for $\Gamma$, i.e. $\mathscr{Q}$ is a simplicial fundamental domain for the group $\Gamma$. To see this, we observe that according to Corollary \ref{L:isog} we have $\Gamma = M(k) \cdot \pi(\widetilde{\Gamma})$. Then the mere inclusion $\pi(\widetilde{\Gamma}) \subset \Gamma$ yields the fact that $\mathscr{B} = \Gamma \cdot \mathscr{Q}$. To continue the argument, we recall that $M(k)$ acts trivially on the entire apartment $\mathscr{A}$, which follows, for example from \cite[Proposition 9.3.16]{KaPr}.
So, if two simplices $\mathscr{S}_1 , \mathscr{S}_2 \subset \mathscr{Q}$ are related by $\mathscr{S}_2 = \gamma(\mathscr{S}_1)$ with $\gamma \in \Gamma$, then writing $\gamma = m \cdot \delta$ with $m \in M(k)$ and $\delta \in \pi(\widetilde{G}(k[t]))$, we will have $\delta(\mathscr{S}_1) = \mathscr{S}_2$. Then $\mathscr{S}_1 = \mathscr{S}_2$ by Theorem \ref{T:Marg}, completing the argument.

\vskip.5mm

\vskip1mm

We will now show that the description of pointwise stabilizers of subsets of $\mathscr{Q}$ obtained in Corollary \ref{C:stab1} for simply connected groups remains valid in the general case. Let $\Omega \subset \mathscr{Q}$ be an arbitrary subset. We have seen in \S2.1 that $\widetilde{\Gamma}_{\Omega} = (\Gamma_{\Omega} \cap \widetilde{U}(K)) \rtimes \widetilde{L}_{I_{\Omega}}(k)$ in the notations introduced there. On the other hand, by Corollary \ref{L:isog}, we have $\Gamma = M(k) \cdot \pi(\widetilde{\Gamma})$. As we already mentioned in the previous paragraph,  $M(k)$ acts on $\mathscr{A}$ trivially, so we conclude that $\Gamma_{\Omega} = M(k) \cdot \pi(\widetilde{\Gamma}_{\Omega})$. The isogeny $\pi$ induces a $k$-isomorphism $\widetilde{U}_{\Omega} \to U_{\Omega}$, and hence group isomorphisms
$$
\widetilde{\Gamma} \cap \widetilde{U}_{\Omega}(K) \to \Gamma \cap U_{\Omega}(K) \ \ \text{and} \ \ \widetilde{\Gamma}_{\Omega} \cap \widetilde{U}_{\Omega}(K) \to \Gamma_{\Omega} \cap U_{\Omega}(K).
$$
Thus, $\pi(\widetilde{\Gamma}_{\Omega}) = (\Gamma_{\Omega} \cap U_{\Omega}(K)) \rtimes \pi(\widetilde{L}_{\Omega}(k))$. Being a subgroup of $\Gamma_{\Omega}$, the group $M(k)$ normalizes the intersection $\Gamma_{\Omega} \cap U_{\Omega}(K)$. Hence
\begin{equation}\label{E:stab10}
\Gamma_{\Omega} = M(k) \cdot \pi(\widetilde{\Gamma}_{\Omega}) = (\Gamma_{\Omega} \cap U_{\Omega}(K)) \rtimes L_{I_{\Omega}}(k)
\end{equation}
as $L_{I_{\Omega}}(k) = M(k) \cdot \pi(\widetilde{L}_{I_{\Omega}}(k))$ (cf. \cite[Proposition 6.11]{BorelTits}). Furthermore, in view of Corollary \ref{C:stab1}, the isomorphism $\widetilde{\Gamma}_{\Omega} \cap \widetilde{U}_{\Omega} \to \Gamma_{\Omega} \cap U_{\Omega}$ enables us to identify the latter with the group $\mathscr{U}(k)$ of $k$-points of a $k$-defined $k$-split unipotent subgroup $\mathscr{U}$.


\vskip1mm

To close this subsection, we now consider one special situation that will come up in the proof of the Raghunathan--Ramanathan theorem in \S \ref{S:RR}. Let $G$ be an absolutely almost simple  $k$-group that is {\it quasi-split} over $k$. As usual, we denote by $S$ a maximal $k$-split torus of $G$. Then the centralizer $M = Z_G(S)$ is a maximal $k$-torus $T$ of $G$ (cf., e.g., \cite[Proposition 16.2.2]{Springer}). Let $\ell/k$ be a finite Galois extension with Galois group $\mathscr{G} = \mathrm{Gal}(\ell/k)$ that contains the splitting field of $T$. Let $\Delta \subset \Phi(G , T)$ be a $\mathscr{G}$-invariant system of simple roots that corresponds to a $k$-defined Borel subgroup of $G$ containing $T$. Let $\mathscr{Q}_{\ell}$ be the sector in the apartment $\mathscr{A}_{\ell}$ defined using this $\Delta$; clearly, $\mathscr{Q}_{\ell}$ is $\mathscr{G}$-invariant.
\begin{prop}\label{P:stab2}
With assumptions and notations as above, let $\Gamma_{\ell} = G(\ell[t])$. Then for any $\mathscr{G}$-invariant subset $\Omega \subset \mathscr{Q}_{\ell}$, the pointwise stabilizer $(\Gamma_{\ell})_{\Omega}$ is of the form $((\Gamma_{\ell})_{\Omega} \cap U_{\Omega}(L)) \rtimes L_{I_{\Omega}}(\ell)$, where $L_{I_{\Omega}}$ is a $k$-defined reductive subgroup and $U_{\Omega}$ is a $k$-defined $k$-split unipotent subgroup, of $G$. Consequently, $(\Gamma_{\ell})_{\Omega}$ can be identified with the group of $\ell$-points of a $k$-defined semi-direct product $\mathscr{U} \rtimes L_{I_{\Omega}}$ where $\mathscr{U}$ is a $k$-defined $k$-split unipotent group.
\end{prop}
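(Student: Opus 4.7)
The strategy is to apply the stabilizer description already established (Corollary \ref{C:stab1} and the extension in \S2.3) to $G$ base-changed to $\ell$, where it splits, and then use the $\mathscr{G}$-invariance of $\Omega$ to descend the resulting subgroups to $k$. Since $G$ is $k$-quasi-split and $T$ splits over $\ell$, the group $G_\ell := G \times_k \ell$ is $\ell$-split, and the relative root system over $\ell$ coincides with the absolute system $\Phi = \Phi(G,T)$. Applying the description of \S2.3 to $G_\ell$ and $\Omega \subset \mathscr{Q}_\ell$ yields
\[
(\Gamma_\ell)_\Omega = \bigl((\Gamma_\ell)_\Omega \cap U_{I_\Omega}(L)\bigr) \rtimes L_{I_\Omega}(\ell),
\]
where $I_\Omega = \bigcap_{x \in \Omega} I_x \subset \Delta$, and $L_{I_\Omega}$, $U_{I_\Omega}$ are, a priori, the standard Levi and unipotent radical of the standard parabolic $P_{I_\Omega} \subset G_\ell$ of type $I_\Omega$. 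Moreover, the procedure recalled before Corollary \ref{C:stab1} identifies $(\Gamma_\ell)_\Omega \cap U_{I_\Omega}(L)$ with the group of $\ell$-points of some $\ell$-defined $\ell$-split unipotent subgroup $\mathscr{U}'$ of $\mathrm{R}_{A_n/\ell}(U_{I_\Omega})$ for $n$ sufficiently large.

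Next I show that these subgroups are in fact $k$-defined. The essential point is that $I_\Omega$ is $\mathscr{G}$-invariant: by construction $\Delta$ is $\mathscr{G}$-stable, and the action of $\mathscr{G}$ on $\mathscr{A}_\ell$ is equivariant with respect to its action on the characters of $T$, so $I_{\sigma x} = \sigma(I_x)$ for every $\sigma \in \mathscr{G}$ and $x \in \mathscr{A}_\ell$; combined with $\sigma(\Omega) = \Omega$ this gives $\sigma(I_\Omega) = I_\Omega$. Consequently the standard parabolic $P_{I_\Omega}$ attached to this $\mathscr{G}$-stable subset of $\Delta$ is $k$-defined, as are its standard Levi $L_{I_\Omega}$ (reductive, being the centralizer of the $k$-subtorus $S_{I_\Omega}$) and its unipotent radical $U_\Omega := U_{I_\Omega}$. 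The latter is $k$-split: grouping the root subgroups $U_\beta$ for $\beta \in \Phi^+ \setminus [I_\Omega]$ into their $\mathscr{G}$-orbits and ordering them compatibly gives a $k$-defined composition series with vector-group quotients. This establishes the first assertion of the proposition.

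For the second assertion I must descend $\mathscr{U}'$ to a $k$-form $\mathscr{U} \subset \mathrm{R}_{A_n/k}(U_\Omega)$. The set $\mathscr{U}'(\ell) = (\Gamma_\ell)_\Omega \cap U_\Omega(L)$ is manifestly $\mathscr{G}$-stable, since $\Gamma_\ell = G(\ell[t])$, $\Omega$, and $U_\Omega$ all are, and the ambient Weil restriction $\mathrm{R}_{A_n/k}(U_\Omega)$ is $k$-defined. Galois descent then yields a $k$-defined $k$-split unipotent subgroup $\mathscr{U} \subset \mathrm{R}_{A_n/k}(U_\Omega)$ with $\mathscr{U} \times_k \ell = \mathscr{U}'$ and $\mathscr{U}(\ell) = (\Gamma_\ell)_\Omega \cap U_\Omega(L)$. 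Since both $\mathscr{U}$ and $L_{I_\Omega}$ are $k$-defined and $L_{I_\Omega}$ acts on $U_\Omega$ by $k$-defined conjugation, the semi-direct product $\mathscr{U} \rtimes L_{I_\Omega}$ is itself $k$-defined and has $\ell$-points equal to $(\Gamma_\ell)_\Omega$. The main obstacle I anticipate is this last descent step: although $\mathscr{U}'(\ell)$ is visibly $\mathscr{G}$-invariant, one needs $\mathscr{U}'$ \emph{as a subscheme} to be $\mathscr{G}$-stable before invoking descent. In characteristic zero this is essentially free (smoothness and connectedness of $\mathscr{U}'$ plus density of its $\ell$-points), but a more hands-on route is to revisit the product decomposition $\prod_j U_{b_j}$ and the associated filtrations of \cite{BT1} used to define $\mathscr{U}'$: organizing the roots $b_j$ and the filtration steps by their $\mathscr{G}$-orbits makes the entire construction manifestly $\mathscr{G}$-equivariant, producing $\mathscr{U}$ directly as a $k$-subgroup.
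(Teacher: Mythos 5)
Your proof is correct and elaborates the same argument the paper only sketches: deduce the semi-direct product decomposition from the earlier results applied to the $\ell$-split group $G_\ell$, and obtain $k$-definedness of $P_{I_\Omega}$, $L_{I_\Omega}$, $U_\Omega$, and $\mathscr{U}$ from the $\mathscr{G}$-stability of $I_\Omega$ (which you correctly derive from $I_{\sigma x}=\sigma(I_x)$ and the $\mathscr{G}$-invariance of $\Delta$ and $\Omega$), followed by Galois descent of the $\ell$-constructed $\mathscr{U}'$. You also rightly flag the one delicate point — promoting $\mathscr{G}$-invariance of $\mathscr{U}'(\ell)$ to $\mathscr{G}$-stability of the subscheme $\mathscr{U}'$ — and note that it is settled either by density of $\ell$-points in characteristic $0$, or, as the paper tacitly intends by the phrase ``elaborating on the procedure,'' by running Margaux's filtration construction $\mathscr{G}$-equivariantly so that $\mathscr{U}$ is produced directly as a $k$-subgroup of $\mathrm{R}_{A_n/k}(U_\Omega)$.
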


The description of $(\Gamma_{\ell})_{\Omega}$ follows from our earlier discussion based on results of Margaux \cite{Marg}, and the stated properties of $L_{I_{\Omega}}$ and $U_{\Omega}$ are clear. Elaborating on the procedure described after Proposition \ref{P:stab1}, one can identify the intersection $(\Gamma_{\ell})_{\Omega} \cap U_{\Omega}(L)$ with the group of $\ell$-points of a $k$-defined $k$-split subgroup $\mathscr{U}$ of $\mathrm{R}_{A_n/k}(U_{\Omega})$ for a sufficiently large $n$, where $A_n = k[t]/(t^n)$. The group $L_{I_{\Omega}}$ naturally acts on $U_{\Omega}$ by conjugation, allowing us to form a $k$-defined semi-direct product $U_{\Omega} \rtimes L_{I_{\Omega}}$. Applying restriction of scalars, we obtain
$$
\mathrm{R}_{A_n/k}(U_{\Omega} \rtimes L_{I_{\Omega}}) = \mathrm{R}_{A_n/k}(U_{\Omega}) \rtimes \mathrm{R}_{A_n/k}(L_{I_{\Omega}}).
$$
The structure homomorphism $k \to A_n$ yields a $k$-morphism $L_{I_{\Omega}} \to \mathrm{R}_{A_n/k}(L_{I_{\Omega}})$, leading to the semi-direct product $\mathrm{R}_{A_n/k}(U_{\Omega}) \rtimes L_{I_{\Omega}}$. Then the intersection $(\Gamma_{\ell})_{\Omega} \cap U_{\Omega}(L)$ is identified with the group of $\ell$-points of the subgroup $\mathscr{U} \rtimes L_{I_{\Omega}}$ of this semi-direct product.

\vskip1mm

\noindent {\bf 2.4. Arbitrary reductive groups.} Let $G$ be a reductive $k$-group, $Z = Z(G)^{\circ}$ be the central torus, and $H = [G , G]$ be the semi-simple part. Let $H_1, \ldots , H_r$ be the $k$-simple components of $H$. For each $i = 1, \ldots , r$, we let $\widetilde{H}_i$ (resp., $\overline{H}_i$) denote the corresponding simply connected (resp., adjoint) group. Also, let $F = Z \cap H$ and $\overline{Z} = Z/F$. Set
$$
\widetilde{G} = Z \times \widetilde{H}_1 \times \cdots \times \widetilde{H}_r \ \ \text{and} \ \ \overline{G} = \overline{Z} \times \overline{H}_1 \times \cdots \times \overline{H}_r.
$$
We then have the evident $k$-isogenies $\pi_1 \colon \widetilde{G} \to G$ and $\pi_2 \colon G \to \overline{G}$. Next, for each $i = 1, \ldots , r$, we can write
$$
\widetilde{H}_i = \mathrm{R}_{\ell_i/k}(\widetilde{\mathscr{H}}_i) \ \ \text{and} \ \ \overline{H}_i = \mathrm{R}_{\ell_i/k}(\overline{\mathscr{H}}_i)
$$
for absolutely almost simple simply connected and adjoint groups $\widetilde{\mathscr{H}}_i$ and $\overline{\mathscr{H}}_i$ and some finite separable extensions $\ell_i/k$. Set $L_i := \ell_i(\!(1/t)\!)$ and let $\mathscr{B}_i$ be the Bruhat--Tits building associated with $\widetilde{\mathscr{H}}_i$ and $\overline{\mathscr{H}}_i$ over $L_i$. The natural identifications
$$
\overline{H}_i(K) \simeq \overline{\mathscr{H}}_i(L_i)
$$
enable us to define an action of $\overline{G}(K)$ on the product of buildings $\mathscr{B} = \mathscr{B}_1 \times \cdots \mathscr{B}_r$, and hence an action of $G(K)$ that factors through $\pi_2$. (We observe that then the natural action of $\widetilde{G}(K)$ on $\mathscr{B}$ factors through $\pi_1$.) Furthermore, for each $i = 1, \ldots , r$, one can fix a maximal $\ell_i$-split torus $\mathscr{S}_i$ of $\mathscr{H}_i$ and a system of simple roots $\Delta_i \subset \Phi(\widetilde{\mathscr{H}}_i , \mathscr{S}_i)$, and then consider the corresponding apartments $\mathscr{A}_i$ of $\mathscr{B}_i$ and the sectors $\mathscr{Q}_i \subset \mathscr{A}_i$. Set
$$
\mathscr{A} = \mathscr{A}_1 \times \cdots \times \mathscr{A}_r \ \ \text{and} \ \ \mathscr{Q} = \mathscr{Q}_1 \times \cdots \times \mathscr{Q}_r.
$$
\begin{prop}\label{P:genred}
{\rm (1)} Every point of $\mathscr{B}$ is equivalent under the group $\Gamma = G(k[t])$ to a point of $\mathscr{Q}$.

\vskip1mm

\noindent {\rm (2)} For any $x \in \mathscr{Q}$, the stabilizer $\Gamma_x$ is of the form $U(k) \rtimes L(k)$ for some reductive $k$-subgroup $L \subset G$ and some unipotent $k$-group $U$.
\end{prop}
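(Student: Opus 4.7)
The plan is to reduce both assertions to the absolutely almost simple simply connected case already treated in \S 2.1, exploiting the isogeny $\pi_1 \colon \widetilde{G} \to G$ and the restriction-of-scalars decompositions $\widetilde{H}_i = \mathrm{R}_{\ell_i/k}(\widetilde{\mathscr{H}}_i)$. Writing $\widetilde{\Gamma} = \widetilde{G}(k[t])$ and using that $Z(k[t]) = Z(k)$ for any $k$-torus $Z$ (since $\ell[t]^{\times} = \ell^{\times}$ for the splitting field of $Z$), adjunction yields
$$
\widetilde{\Gamma} = Z(k) \times \prod_{i=1}^{r} \widetilde{\mathscr{H}}_i(\ell_i[t]).
$$
The action of $\widetilde{G}(K)$ on $\mathscr{B}$ factors through $\pi_2 \circ \pi_1$, and $(\pi_2 \circ \pi_1)(Z) \subseteq \overline{Z}$ acts trivially on $\mathscr{B}$; thus $Z(k)$ is in the kernel. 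For (1), Theorem \ref{T:Marg} applied to $\widetilde{\mathscr{H}}_i$ over $\ell_i$ gives $\widetilde{\mathscr{H}}_i(\ell_i[t]) \cdot \mathscr{Q}_i = \mathscr{B}_i$; taking products and using $\pi_1(\widetilde{\Gamma}) \subseteq \Gamma$, we conclude $\Gamma \cdot \mathscr{Q} = \mathscr{B}$.

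For (2), fix $x = (x_1, \ldots, x_r) \in \mathscr{Q}$. The stabilizer in $\widetilde{\Gamma}$ decomposes as
$$
\widetilde{\Gamma}_x = Z(k) \times \prod_{i=1}^{r} \bigl(\widetilde{\mathscr{H}}_i(\ell_i[t])\bigr)_{x_i},
$$
and Corollary \ref{C:stab1}, applied to each factor, presents $(\widetilde{\mathscr{H}}_i(\ell_i[t]))_{x_i}$ as $\widetilde{\mathscr{U}}_i(\ell_i) \rtimes \widetilde{\mathscr{L}}_i(\ell_i)$ with $\widetilde{\mathscr{U}}_i$ an $\ell_i$-defined $\ell_i$-split unipotent group and $\widetilde{\mathscr{L}}_i$ an $\ell_i$-defined reductive Levi in $\widetilde{\mathscr{H}}_i$. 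Restriction of scalars then identifies $\widetilde{\Gamma}_x$ with the group of $k$-points of a $k$-defined semi-direct product $\widetilde{U} \rtimes \widetilde{L}$, where $\widetilde{U} = \prod_i \mathrm{R}_{\ell_i/k}(\widetilde{\mathscr{U}}_i)$ is a (generally non-split) $k$-unipotent group and $\widetilde{L} = Z \times \prod_i \mathrm{R}_{\ell_i/k}(\widetilde{\mathscr{L}}_i)$ is a $k$-defined reductive subgroup of $\widetilde{G}$.

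To transfer this description to $\Gamma_x$, I would invoke Corollary \ref{L:isog} to get $\Gamma = M(k) \cdot \pi_1(\widetilde{\Gamma})$, where $M = Z_G(S)$. Next I would verify that $M(k)$ fixes $\mathscr{A}$ pointwise: since the action of $G(K)$ on $\mathscr{B}$ factors through $\pi_2$, the image $\pi_2(M)$ lies in the centralizer of a maximal $k$-split torus of $\overline{G}$; this centralizer respects the product structure $\overline{G} = \overline{Z} \times \prod_i \overline{H}_i$, and each factor fixes the corresponding apartment pointwise (\cite[Proposition 9.3.16]{KaPr}, as in \S 2.3). After, if necessary, replacing the $\mathscr{S}_i$ by compatible conjugates so that $\mathscr{A} = \prod \mathscr{A}_i$ is indeed the apartment attached to $\pi_2(S)$, we obtain $\Gamma_x = M(k) \cdot \pi_1(\widetilde{\Gamma}_x)$. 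Since $\pi_1$ restricts to an isomorphism on unipotent radicals, $U := \pi_1(\widetilde{U})$ is a $k$-defined unipotent group; and taking $L \subset G$ to be the $k$-defined reductive subgroup generated by $M$ and $\pi_1(\widetilde{L})$, the identity $L(k) = M(k) \cdot \pi_1(\widetilde{L}(k))$ holds by \cite[Proposition 6.11]{BorelTits}, exactly as in (\ref{E:stab10}). Since $M(k) \subseteq L(k)$ normalizes $U(k)$, we conclude $\Gamma_x = U(k) \rtimes L(k)$.

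The main obstacle is the final step: packaging $M(k) \cdot \pi_1(\widetilde{L}(k))$ as the group of $k$-points of a single $k$-defined reductive subgroup $L \subset G$, when the Levi pieces $\mathrm{R}_{\ell_i/k}(\widetilde{\mathscr{L}}_i)$ live over different extensions $\ell_i$ and the chosen $\mathscr{S}_i$ are a priori not tied to the maximal $k$-split torus $S$ underlying $M$. Resolving this is the multi-factor, restriction-of-scalars version of the single-simple-factor argument that produced (\ref{E:stab10}); the apartment-compatibility step above is precisely what reduces it to that established case.
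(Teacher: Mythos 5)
Your proposal follows essentially the same route as the paper's own (very brief) argument: for (1), apply Theorem \ref{T:Marg} to each $\widetilde{\mathscr{H}}_i$ over $\ell_i$ and take products; for (2), decompose $\widetilde{\Gamma}_x = Z(k) \times \prod_i (\widetilde{\mathscr{H}}_i(\ell_i[t]))_{x_i}$, invoke the structure results of \S 2.1 factor by factor, then pass to $\Gamma_x$ via Corollary~\ref{L:isog} and the argument of \S 2.3. The paper condenses the last step to ``argues exactly as in \S 2.3'' where you spell out more of the bookkeeping.

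The concern you flag at the end -- packaging $M(k)\cdot\pi_1(\widetilde{L}(k))$ as $L(k)$ for a single $k$-defined reductive $L\subset G$ -- is not a genuine obstacle: $\widetilde{L}$ is a standard Levi of $\widetilde{G}$ containing $\widetilde{S}$, its image $L := \pi_1(\widetilde{L})$ is the corresponding standard Levi of $G$ containing $S$, so $M = Z_G(S)\subset L$ and $S$ is still a maximal $k$-split torus of $L$; applying \cite[Proposition 6.11]{BorelTits} to $L$ gives $L(k) = M(k)\cdot L(k)^+$ and $L(k)^+\subset\pi_1(\widetilde{L}(k))\subset L(k)$, so $L(k) = M(k)\cdot\pi_1(\widetilde{L}(k))$, exactly as in the derivation of~(\ref{E:stab10}). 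One small inaccuracy: since $\mathrm{char}\,k=0$, the restriction of scalars $\widetilde{U}=\prod_i\mathrm{R}_{\ell_i/k}(\widetilde{\mathscr{U}}_i)$ is automatically $k$-split, so the parenthetical ``(generally non-split)'' is unwarranted -- though irrelevant to the statement, which only asserts that $U$ is a unipotent $k$-group.
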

\begin{proof}
Part (1) follows from the fact that every point of $\mathscr{B}$ is equivalent to a point of $\mathscr{Q}$ under the action of $\widetilde{\mathscr{H}}_1(\ell_1[t]) \times \cdots \times \widetilde{\mathscr{H}}_r(\ell_r[t])$, which is a consequence of Theorem \ref{T:Marg}.
To prove part (2), one observes that the group $\widetilde{\Gamma} = \widetilde{G}(k[t])$ admits the following decomposition
$$
\widetilde{\Gamma} = Z(k) \times \tilde{\Gamma}_1 \times \cdots \times \tilde{\Gamma}_r \ \ \text{where} \ \ \tilde{\Gamma}_i = \mathscr{H}_i(\ell_i[t]).
$$
If $x = (x_1, \ldots , x_r)$ then clearly $\widetilde{\Gamma}_x = Z(k) \times (\widetilde{\Gamma}_1)_{x_1} \times \cdots \times (\widetilde{\Gamma}_r)_{x_r}$. Proposition \ref{P:stab1} provides a description of each stabilizer $(\widetilde{\Gamma}_i)_{x_i}$, which yields a description of $\widetilde{\Gamma}_x$. To obtain the required description of $\Gamma_x$, one observes that $\Gamma_x = M(k) \cdot \pi_1(\widetilde{\Gamma}_x)$ where $M$ is the centralizer of the maximal $k$-split torus of $G$ associated with $\mathscr{Q}$ (see Corollary \ref{L:isog}) and argues exactly as in \S2.3.
\end{proof}

We note that the result remains valid when some of the $k$-simple components are $k$-anisotropic.

\vskip1mm

\noindent {\bf 2.5. Type-preserving automorphisms.} We refer the reader to \cite{AbBr} for general background on buildings. We recall that given a building $\mathscr{B}$, every apartment $\mathscr{A}$ of $\mathscr{B}$ is isomorphic to a standard Coxeter complex $\Sigma(\mathscr{W} , \mathscr{S})$, where the Coxeter system $(\mathscr{W} , \mathscr{S})$ is uniquely determined by $\mathscr{B}$ and called the {\it type} of $\mathscr{B}$. There exists a {\it type function} $\tau$ defined on the set of vertices of $\mathscr{B}$ with values in $\mathscr{S}$ such that for every chamber $\mathscr{C}$ of $\mathscr{B}$, the restriction of $\tau$ to the set of vertices of $\mathscr{C}$ is a bijection onto $\mathscr{S}$ (see \cite[Proposition 4.6]{AbBr}; we note that ``to be colorable'' precisely means ``to admit a type function,'' cf. Definition A.10 in {\it loc. cit.}). Since $\mathscr{B}$ is a chamber complex, $\tau$ is uniquely determined by its restriction to the set of vertices of a single chamber. To every simplicial automorphism $\phi$ of $\mathscr{B}$ one can associate a permutation $\pi = \pi(\phi)$ of $\mathscr{S}$ such that
$$
\tau(\phi(v)) = \pi(\tau(v)) \ \ \text{for all vertices} \ \ v \ \ \text{of} \ \ \mathscr{B}
$$
(this follows, for example, from Proposition A.14 of Section A.1.3 in \cite{AbBr}). Then $\phi$ is called {\it type-preserving} if $\pi(\phi) = \mathrm{id}_{\mathscr{S}}$. We note that in order to establish that $\phi$ is type-preserving, it is enough to show that $\tau(\phi(v)) = \tau(v)$ for all vertices $v$ of a single chamber $\mathscr{C}$ of $\mathscr{B}$. Here is one simple but important fact concerning type-preserving automorphisms.
\begin{lemma}\label{L:TP}
Let $\phi$ be a type-preserving automorphism of a building $\mathscr{B}$,  let $X = \vert \mathscr{B} \vert$ be the geometric realization
of $\mathscr{B}$, and let $f \colon X \to X$ be the homeomorphism induced by $\phi$.

\vskip1mm

\noindent {\rm (1)} \parbox[t]{16cm}{If $\Sigma$ is a simplex of $\mathscr{B}$ such that $\phi(\Sigma) = \Sigma$, then $\phi(v) = v$ for every vertex $v$ of $\Sigma$, and hence $f$ acts trivially on $\vert \Sigma \vert$.}

\vskip1mm

\noindent {\rm (2)} \parbox[t]{16cm}{If $f$ fixes a point of $X$, then $\phi$ fixes a vertex of $\mathscr{B}$.}

\end{lemma}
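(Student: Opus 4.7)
The plan is to prove both parts directly from the definitions and basic properties of type functions recalled just above the lemma.

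For part (1), I would use the fact that the vertices of $\Sigma$ lie in some chamber $\mathscr{C}$ of $\mathscr{B}$, and that $\tau$ restricted to the vertex set of $\mathscr{C}$ is a bijection onto $\mathscr{S}$. In particular, $\tau$ is injective on the vertices of $\Sigma$. Since $\phi(\Sigma)=\Sigma$, the map $\phi$ permutes the vertex set of $\Sigma$; and since $\phi$ is type-preserving, each vertex $v$ of $\Sigma$ satisfies $\tau(\phi(v))=\tau(v)$. Injectivity of $\tau$ on the vertex set of $\Sigma$ then forces $\phi(v)=v$. The induced homeomorphism $f$ is affine on $|\Sigma|$ (being the geometric realization of a simplicial map), and an affine self-map of a simplex that fixes all vertices is the identity on that simplex. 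This yields the claim that $f$ acts trivially on $|\Sigma|$.

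For part (2), let $x \in X$ be a point with $f(x)=x$. Every point of $X$ lies in the relative interior of a unique simplex of $\mathscr{B}$; call this simplex $\Sigma$, so that $x \in \mathrm{int}\,|\Sigma|$. Because $f$ is the geometric realization of the simplicial automorphism $\phi$, it maps $\mathrm{int}\,|\Sigma|$ homeomorphically onto $\mathrm{int}\,|\phi(\Sigma)|$. Thus $x=f(x) \in \mathrm{int}\,|\phi(\Sigma)|$, which forces $\phi(\Sigma)=\Sigma$ by the uniqueness of the open simplex containing $x$. Applying part (1) to $\Sigma$, we see that $\phi$ fixes every vertex of $\Sigma$, and in particular $\phi$ fixes a vertex of $\mathscr{B}$.

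Neither part presents a real obstacle; the only subtle point is making sure to invoke the right general facts about geometric realizations of simplicial complexes, namely that a simplicial automorphism of a simplicial complex that permutes a simplex and preserves the type of each vertex of that simplex must fix each vertex, and that the open simplices of a simplicial complex partition its geometric realization. Both are standard and already implicit in the setup from \cite{AbBr} referenced in the paragraph preceding the lemma.
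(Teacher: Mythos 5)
Your proof is correct and follows essentially the same line as the paper's: part (1) via a chamber containing $\Sigma$ and the injectivity of the type function on its vertices, and part (2) via the unique carrier simplex of the fixed point. The only cosmetic difference is that the paper phrases part (2) using the closed carrier $F_x$ rather than the open simplex, but the argument is identical.
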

\begin{proof}
(1): Let $\mathscr{C}$ be a chamber containing $\Sigma$. Since $\phi(\Sigma) = \Sigma$, for every vertex $v$ of $\Sigma$, the image $w = \phi(v)$ is also a vertex of $\Sigma$, hence of $\mathscr{C}$. But $\tau(w) = \tau(\phi(v)) = \tau(v)$ as $\phi$ is type-preserving. Since the restriction of $\tau$ to the set of vertices of $\mathscr{C}$ is injective, we conclude that $w = v$, i.e. $\phi(v) = v$.

\vskip1mm

(2): Assume that $f(x) = x$ for a point $x \in X$. Then there is a unique simplex $\Sigma_x$ in $\mathscr{B}$ such that $\vert \Sigma_x \vert$ is the carrier $F_x$ of $x$, i.e. the closed cell/simplex in $X$ that contains $x$ in its interior. We have $F_x = F_{f(x)} = f(F_x)$, which implies that $\phi(\Sigma_x) = \Sigma_x$. By part (1), $\phi$ (hence also $f$) fixes every vertex of $\Sigma_x$.
\end{proof}



We now return to the notations introduced in subsection 2.3. It follows from \cite[Proposition 5.2.10(i), p.~165]{BT2} and standard results about $BN$-pairs and their associated buildings (cf. \cite[Theorem, p. 80]{Garrett}) that  the group $\widetilde{G}(K)$ acts on $\mathscr{B}$ by type-preserving transformations, but this may not be true for the action of $G(K)$. Nevertheless, we have the following.
\begin{prop}\label{P:TP}
Let $G$ be an absolutely almost simple algebraic $k$-group. Then $G(k[t])$ acts on $\mathscr{B}$ by type-preserving transformations.
\end{prop}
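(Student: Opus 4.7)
The plan is to combine three ingredients: the decomposition of $G(k[t])$ coming from Corollary~\ref{L:isog}, the type-preserving property of the action of the simply connected cover (already cited), and the fact that $M(k)$ acts trivially on the apartment $\mathscr{A}$ (which was used in \S2.3). The group of type-preserving simplicial automorphisms of $\mathscr{B}$ forms a subgroup of $\mathrm{Aut}(\mathscr{B})$, so it suffices to show that the two subgroups generating $G(k[t])$ act by type-preserving transformations.

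First, by Corollary~\ref{L:isog}, we have $G(k[t]) = M(k) \cdot \pi(\widetilde{G}(k[t]))$, where $M = Z_G(S)$ and $\pi \colon \widetilde{G} \to G$ is the simply connected cover. By the result cited from \cite[Proposition 5.2.10(i)]{BT2} together with the standard theory of $BN$-pairs in \cite{Garrett}, the canonical action of $\widetilde{G}(K)$ on $\mathscr{B}$ is type-preserving. Since, as recalled in \S2.3, the action of $\widetilde{G}(K)$ on $\mathscr{B}$ is obtained from the action of $G(K)$ by composition with $\pi(K) \colon \widetilde{G}(K) \to G(K)$, the subgroup $\pi(\widetilde{G}(K)) \subset G(K)$ acts by type-preserving transformations. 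In particular, so does the smaller subgroup $\pi(\widetilde{G}(k[t]))$.

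Next, I would handle $M(k)$. As noted in \S2.3 (citing \cite[Proposition 9.3.16]{KaPr}), the group $M(k)$ acts trivially on the entire apartment $\mathscr{A}$. In particular, fixing any chamber $\mathscr{C}$ contained in $\mathscr{A}$ (such chambers exist, e.g., in the sector $\mathscr{Q}$), every element $m \in M(k)$ fixes each vertex of $\mathscr{C}$ pointwise. Hence for the associated automorphism $\phi_m$ of $\mathscr{B}$ we have $\tau(\phi_m(v)) = \tau(v)$ for every vertex $v$ of $\mathscr{C}$. By the remark recalled just before Lemma~\ref{L:TP} (that a simplicial automorphism is type-preserving as soon as it preserves the type of every vertex of a single chamber, since $\mathscr{B}$ is a chamber complex), we conclude that $\phi_m$ is type-preserving.

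Combining the two cases, every generator of $G(k[t]) = M(k) \cdot \pi(\widetilde{G}(k[t]))$ acts by a type-preserving automorphism, and therefore so does every element of $G(k[t])$. I do not anticipate any serious obstacle here: the only delicate point is making sure one correctly interprets the action of $\widetilde{G}(K)$ as genuinely factoring through $\pi(K)$ on $\mathscr{B}$, which was set up explicitly in \S2.3, and this is what allows one to transfer the type-preserving property from $\widetilde{G}(K)$ to the image $\pi(\widetilde{G}(K)) \subset G(K)$.
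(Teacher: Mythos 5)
Your argument is correct, and it is genuinely simpler than the one in the paper, while using the same decomposition $G(k[t]) = M(k) \cdot \pi(\widetilde{G}(k[t]))$ from Corollary~\ref{L:isog} and the same two facts (type-preservation for $\pi(\widetilde{G}(K))$ and triviality of the $M(k)$-action on $\mathscr{A}$). The difference is in how $M(k)$ is handled. You observe directly that since $M(k)$ fixes the apartment $\mathscr{A}$ pointwise, it fixes the vertices of any chamber $\mathscr{C}\subset\mathscr{A}$, and then invoke the remark stated just before Lemma~\ref{L:TP}: to verify that an automorphism $\phi$ is type-preserving it suffices to check $\tau(\phi(v))=\tau(v)$ on the vertices of a single chamber (because the permutation $\pi(\phi)$ of $\mathscr{S}$ satisfying $\tau(\phi(v))=\pi(\phi)(\tau(v))$ for all vertices is then the identity, since $\tau$ restricted to vertices of a chamber is a bijection onto $\mathscr{S}$). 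This settles the matter immediately.

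The paper instead verifies type-preservation for $M(k)$ vertex-by-vertex over all of $\mathscr{B}$: for a vertex $v$, it chooses $g\in\pi(\widetilde{G}(k[t]))$ with $gv\in\mathscr{A}$, rewrites $mv=(mg^{-1}m^{-1}g)v$, and then runs a cohomological argument (exactness of $\widetilde{G}(k[t])\to G(k[t])\to H^1(\mathcal{G}_k, C(\bar{k}[t]))$) to show that $\pi(\widetilde{G}(k[t]))$ is normal with abelian quotient, hence contains the commutator $[m,g^{-1}]$, whence $m$ moves $v$ by a type-preserving element. Your route avoids the commutator and cohomology step entirely and leans harder on the simplicial fact about type functions being determined on a single chamber --- a fact the paper already records in \S2.5. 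Both proofs are valid; yours is shorter and more transparent. (One can note, though only for completeness, that if $G$ is $k$-anisotropic the building degenerates to a point and the statement is vacuous, so the absence of chambers there does not cause trouble for either argument.)
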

\begin{proof}
As we mentioned above, the action of $\tilde{G}(k[t])$ on $\mathscr{B}$ is type-preserving  and factors through the isogeny $\pi \colon \widetilde{G} \to G$. On the other hand, by Corollary \ref{L:isog} we have $G(k[t]) = M(k) \cdot \pi(\widetilde{G}(k[t]))$ where $M$ is the centralizers of a fixed maximal $k$-split torus $S$ of $G$. So, it is enough to show that $M(k)$ acts on $\mathscr{B}$ by type-preserving transformations. As we mentioned at the start of \S2.3,
$M(k)$ acts trivially on the apartment $\mathscr{A} \subset \mathscr{B}$  corresponding to $S$. But for any vertex $v$ of $\mathscr{B}$, there exists a $g \in G(k[t])$ (and even in $\pi(\tilde{G}(k[t]))$) such that $gv \in \mathscr{A}$ (see \S2.3). Then for any $m \in M(k)$ we have $m(gv) = gv$, so
$$
mv = (mg^{-1}m^{-1}g)v.
$$
Let $C = \ker \pi.$ Since $C$ is central, it follows that the coboundary map $G(k[t]) \to H^1(\mathcal{G}_k, C(\bar{k}[t]))$ is a group homomorphism (where $\mathcal{G}_k = \Ga(\bar{k}/k)$) and we have an exact sequence of groups
$$
\tilde{G}(k[t]) \stackrel{\pi_{k[t]}}{\longrightarrow} G(k[t]) \to H^1(\mathcal{G}_k, C(\bar{k}[t]))
$$
(see, e.g., \cite[Ch. 1, \S1.3.2]{PlRa}).
From this, we see that $\pi(\widetilde{G}(k[t]))$ is a normal subgroup of $G(k[t])$, and the quotient $G(k[t])/\pi(\widetilde{G}(k[t]))$ is abelian, so
$$
\pi(\widetilde{G}(k[t])) \supset [G(k[t]) , G(k[t])].
$$
In particular, the element $mg^{-1}m^{-1}g$ lies in $\pi_{k[t]}(\tilde{G}(k[t]))$, and since the latter acts by type-preserving transformations, we conclude that $m$ also acts by type-preserving transformations, as required.
\end{proof}

Although we differentiated between a building and its geometric realization in the above discussion, we do not make this distinction elsewhere in the paper.

\vskip1mm

\noindent {\bf 2.6. The Fixed Point Theorem.} The proofs of the main results of this paper critically depend on the following
Fixed Point Theorem. We recall that a Bruhat--Tits building is equipped with a canonical metric and is a complete CAT(0) space for that metric
(cf. \cite[11.2]{AbBr}). So, the Fixed Point Theorem for isometric actions of groups on CAT(0) spaces with a bounded orbit \cite[Theorem 11.23]{AbBr} has the following consequence.

\begin{thm}\label{T:FPT}
Let $\mathscr{B}_1, \ldots , \mathscr{B}_r$ be Bruhat--Tits buildings, and suppose that a finite group $\Gamma$ acts on each building $\mathscr{B}_i$ by isometries. Then there exists a fixed point for the diagonal action of $\Gamma$ on the product $\mathscr{B} := \mathscr{B}_1 \times \cdots \times \mathscr{B}_r$.
\end{thm}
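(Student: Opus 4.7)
The plan is to reduce Theorem \ref{T:FPT} directly to the single-space fixed point theorem \cite[Theorem 11.23]{AbBr} that is already cited. The only things to verify are that the product $\mathscr{B} = \mathscr{B}_1 \times \cdots \times \mathscr{B}_r$, equipped with a suitable metric, is a complete CAT(0) space on which the diagonal action of $\Gamma$ is by isometries with a bounded orbit. Once these are in place, the cited theorem delivers the fixed point immediately.

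First, I would equip $\mathscr{B}$ with the $\ell^2$-product metric
\[
d\bigl((x_1,\ldots,x_r),(y_1,\ldots,y_r)\bigr) = \Bigl(\sum_{i=1}^r d_i(x_i,y_i)^2\Bigr)^{1/2},
\]
where $d_i$ is the canonical CAT(0) metric on $\mathscr{B}_i$ (from \cite[11.2]{AbBr}). Completeness of the product is routine from completeness of each factor, and the CAT(0) property is preserved under $\ell^2$-products of CAT(0) spaces --- this is a standard fact (geodesics in the product are coordinate-wise geodesics, and the CAT(0) four-point inequality in the product follows from the corresponding inequalities in each factor together with the Pythagorean relation for the product metric). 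Thus $\mathscr{B}$ is a complete CAT(0) space.

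Next, the diagonal action of $\Gamma$ on $\mathscr{B}$ is by isometries since each of the factor actions is. Finally, because $\Gamma$ is a \emph{finite} group, every orbit $\Gamma \cdot x \subset \mathscr{B}$ is a finite set and hence automatically bounded. Applying \cite[Theorem 11.23]{AbBr} to this isometric action with a bounded orbit on the complete CAT(0) space $\mathscr{B}$ yields a $\Gamma$-fixed point, which is the required fixed point of the diagonal action. There is no genuine obstacle here --- the result is essentially a packaging of the cited theorem together with the stability of the CAT(0) condition under finite products --- so the only care needed is in the verification (which I would state but not belabor) that the $\ell^2$-product of CAT(0) spaces is CAT(0).
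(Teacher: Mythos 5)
Your proof is correct, but it takes a slightly different (and slightly heavier) route than the paper's. The paper does not apply the fixed point theorem to the product at all; it simply applies \cite[Theorem 11.23]{AbBr} separately to each factor $\mathscr{B}_i$ (each orbit $\Gamma \cdot x_i$ is finite, hence bounded), obtains a $\Gamma$-fixed point $b_i^0 \in \mathscr{B}_i$ for each $i$, and observes that $b^0 = (b_1^0, \ldots, b_r^0)$ is then fixed by the diagonal action. That argument needs nothing about the geometry of the product. Your approach instead applies the theorem once to $\mathscr{B} = \mathscr{B}_1 \times \cdots \times \mathscr{B}_r$, which requires first checking that the $\ell^2$-product of complete CAT(0) spaces is a complete CAT(0) space --- a true and standard fact, and indeed the paper records it parenthetically right after the proof, while explicitly noting that it is \emph{not needed}. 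So both arguments are valid; yours is a clean one-shot application at the cost of invoking stability of CAT(0) under products, whereas the paper's factor-wise argument sidesteps that verification entirely.
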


\begin{proof}
It follows from the Fixed Point Theorem for CAT(0) spaces that for each $i = 1, \ldots , r$, there exists a $\Gamma$-fixed point $b_i^0 \in \mathscr{B}_i$. Then $b^0 = (b_1^0, \ldots , b_r^0)$ is a fixed point for the diagonal action of $\Gamma$ on $\mathscr{B}$.
\end{proof}

(Even though it is not needed in the proof of the theorem, one should keep in mind that given (complete) CAT(0) metric spaces $(X_1 , d_1), \ldots , (X_r , d_r)$, their product $X := X_1 \times \cdots \times X_r$ is a (complete) CAT(0) space for the metric $d$ defined by $d^2 = d_1^2 + \cdots + d_r^2$.)


\section{Some auxiliary results}

In this section, we establish several auxiliary statements needed for the proof of the main results. The reader is referred to \cite[Ch.~I, \S 5]{Serre-GC} or \cite[1.3.2]{PlRa} for the basics of nonabelian cohomology. We begin with the following well-known lemma.

\begin{lemma}\label{L:2}
Let $G = N \rtimes H$ be a semi-direct product of abstract groups, and let $\Psi \subset H$ be a subgroup. Then a map
$f_{\xi} \colon \Psi \to G$ of the form $\delta \mapsto (\xi(\delta) , \delta)$ is a group homomorphism if and only if
the map $\xi \colon \Psi \to N$ is a 1-cocycle. Furthermore, two such homomorphisms $f_{\xi_1}$ and $f_{\xi_2}$ are conjugate by
an element $n \in N$ if and only if the corresponding cocycles $\xi_1$ and $\xi_2$ are equivalent.
\end{lemma}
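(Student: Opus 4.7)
The plan is to unwind all three conditions (being a homomorphism, being a cocycle, being cohomologous) via the explicit formula for multiplication in the semi-direct product and check that they match on the nose. No serious obstacle is expected; the whole lemma is really a translation exercise, so the only thing to be careful about is bookkeeping of which side the action of $H$ on $N$ is written.

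First, I would fix notation: write the multiplication in $G = N \rtimes H$ as
$$
(n_1, h_1)(n_2, h_2) = (n_1 \cdot {}^{h_1}\!n_2, \ h_1 h_2),
$$
where $h \mapsto {}^{h}(-)$ denotes the action of $H$ on $N$ that defines the semi-direct product. Recall that a map $\xi \colon \Psi \to N$ is a $1$-cocycle (with respect to the restricted action of $\Psi \subset H$) precisely when
$$
\xi(\delta_1 \delta_2) = \xi(\delta_1) \cdot {}^{\delta_1}\xi(\delta_2) \qquad \text{for all } \delta_1, \delta_2 \in \Psi,
$$
and two cocycles $\xi_1$, $\xi_2$ are equivalent precisely when there exists $n \in N$ with $\xi_2(\delta) = n^{-1} \cdot \xi_1(\delta) \cdot {}^{\delta}n$ for all $\delta$ (or, equivalently, after replacing $n$ by $n^{-1}$, $\xi_2(\delta) = n \cdot \xi_1(\delta) \cdot {}^{\delta}(n^{-1})$).

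For the first assertion, I would compute both sides of $f_{\xi}(\delta_1 \delta_2) = f_{\xi}(\delta_1) f_{\xi}(\delta_2)$ using the formula above. The left-hand side equals $(\xi(\delta_1\delta_2), \delta_1\delta_2)$, while the right-hand side equals $(\xi(\delta_1) \cdot {}^{\delta_1}\xi(\delta_2), \delta_1 \delta_2)$. Equality of these pairs for all $\delta_1, \delta_2$ is manifestly equivalent to the cocycle identity, and the fact that $f_{\xi}(1) = (1,1)$ follows automatically once one checks (by plugging $\delta_1 = \delta_2 = 1$ into the cocycle identity) that $\xi(1) = 1$.

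For the second assertion, I would conjugate $f_{\xi_1}$ by an element $n \in N$, regarded inside $G$ as $(n, 1)$. A direct computation yields
$$
(n,1)(\xi_1(\delta), \delta)(n, 1)^{-1} = (n \cdot \xi_1(\delta), \delta)(n^{-1}, 1) = \bigl(n \cdot \xi_1(\delta) \cdot {}^{\delta}(n^{-1}),\ \delta \bigr).
$$
This pair equals $(\xi_2(\delta), \delta)$ for all $\delta \in \Psi$ if and only if $\xi_2(\delta) = n \cdot \xi_1(\delta) \cdot {}^{\delta}(n^{-1})$ for all $\delta$, which is the statement that $\xi_1$ and $\xi_2$ are equivalent (cohomologous) cocycles. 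This closes the argument. The only thing one has to keep track of is the consistent choice of sign/convention for the coboundary, which is why the $n$ versus $n^{-1}$ formulations are both acceptable.
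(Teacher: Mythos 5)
Your proof is correct and follows essentially the same route as the paper: compute $f_{\xi}(\delta_1)f_{\xi}(\delta_2)$ via the semi-direct product multiplication to read off the cocycle condition, then conjugate $f_{\xi_1}$ by an element of $N$ to read off the coboundary relation. The only cosmetic difference is that you conjugate by $(n,1)$ on the left whereas the paper conjugates by $(n,1)^{-1}$ on the left, which as you note amounts to replacing $n$ by $n^{-1}$.
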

\begin{proof}
We find by direct computation that for $\delta_1 , \delta_2 \in \Psi$, we have
$$
f_{\xi}(\delta_1)f_{\xi}(\delta_2) = (\xi(\delta_1) , \delta_1) (\xi(\delta_2) , \delta_2) = (\xi(\delta_1) \cdot (\delta_1 \xi(\delta_2)) , \delta_1 \delta_2), $$
and our first assertion follows. Furthermore,
$$
(n , 1)^{-1} f_{\xi}(\delta) (n , 1) = ( n^{-1} \cdot \xi(\delta) \cdot (\delta n) , \delta),
$$
yielding our second assertion.
\end{proof}

\vskip1mm

\begin{lemma}\label{L:3}
Let $G = N \rtimes H$ be a semi-direct product and $\pi \colon G \to H$ be the canonical projection. If $\Gamma \subset G$ is a subgroup such that
$\Gamma \cap N = \{ e \}$ and $H^1(\pi(\Gamma) , N) = 1$, then $\Gamma$ is conjugate in $G$ to a subgroup of $H$.
\end{lemma}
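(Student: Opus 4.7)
The plan is to use Lemma \ref{L:2} twice: once to convert the subgroup $\Gamma$ into a $1$-cocycle, and once to convert the triviality of that cocycle back into a conjugacy statement.

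First I would observe that, because $\Gamma \cap N = \{e\}$, the restriction $\pi|_\Gamma \colon \Gamma \to \Psi := \pi(\Gamma) \subset H$ is an injective group homomorphism, hence an isomorphism onto $\Psi$. Let $s \colon \Psi \to \Gamma$ denote its inverse, viewed as a section of $\pi$ over $\Psi$. Since $s(\delta) \in G = N \rtimes H$ projects to $\delta \in H$, we can write $s(\delta) = (\xi(\delta), \delta)$ for a uniquely determined map $\xi \colon \Psi \to N$, and $s(\Psi) = \Gamma$.

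Next, because $s$ is a group homomorphism, the first half of Lemma \ref{L:2} tells us that $\xi$ is a $1$-cocycle on $\Psi$ with values in $N$. The assumption $H^1(\pi(\Gamma), N) = H^1(\Psi, N) = 1$ then guarantees that $\xi$ is equivalent to the trivial cocycle $\xi_0 \equiv 1$, whose associated homomorphism is $f_{\xi_0} \colon \delta \mapsto (1, \delta)$, with image exactly $\Psi \subset H$. Applying the second half of Lemma \ref{L:2}, there exists $n \in N$ such that
$$
(n, 1)^{-1} \, s(\delta) \, (n, 1) = f_{\xi_0}(\delta) = (1, \delta) \qquad \text{for all } \delta \in \Psi.
$$
Consequently $(n,1)^{-1} \Gamma (n,1) = \Psi \subset H$, which is the required conjugacy.

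Since every step is already packaged in Lemma \ref{L:2}, there is no real obstacle; the only thing to verify carefully is that the injectivity of $\pi|_\Gamma$ really does give a well-defined homomorphic section $s$, and that the cocycle arising from $s$ is the one whose cohomology class is killed by the hypothesis. Both are immediate from the setup, so the proof is essentially a two-line translation via Lemma \ref{L:2}.
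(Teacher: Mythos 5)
Your proof is correct and follows essentially the same route as the paper: use the injectivity of $\pi|_\Gamma$ to produce a homomorphic section $s \colon \Psi \to G$, invoke Lemma~\ref{L:2} to identify $s$ with a $1$-cocycle $\xi$, then use $H^1(\Psi,N)=1$ together with the second part of Lemma~\ref{L:2} to conjugate $\Gamma = s(\Psi)$ into $H$ by an element of $N$. No gaps.
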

\begin{proof}
Set $\Psi = \pi(\Gamma)$. By our assumption, $\pi$ induces an isomorphism $\Gamma \to \Psi$; let $f \colon \Psi \to G$ be the
inverse of this isomorphism. It follows from Lemma \ref{L:2} that $f$ is of the form $f_{\xi}(\delta) = (\xi(\delta) , \delta)$ for some
1-cocycle $\xi \colon \Psi \to N$. Since $H^1(\Psi , N)$ is trivial, applying Lemma \ref{L:2} we see that $f$ is conjugate by an element of $N$
to the identity embedding $\Psi \to H$, $\delta \mapsto (1 , \delta)$, yielding our claim.
\end{proof}

\vskip1mm

\begin{lemma}\label{L:1}
Let $U$ be a unipotent group defined over a field $k$ of characteristic 0. Then for any finite group $\Gamma$ acting on $U(k)$, the cohomology set $H^1(\Gamma , U(k))$ is trivial.
\end{lemma}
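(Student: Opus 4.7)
The plan is to induct on $\dim U$, taking the commutative case as the base. When $U$ is commutative, the characteristic zero hypothesis ensures $U \simeq \mathbb{G}_a^n$ as a $k$-group, so $U(k) \simeq k^n$ carries a $\Q$-vector space structure on which $\Gamma$ acts by $\Q$-linear automorphisms (assuming, as in the natural applications of this lemma, that $\Gamma$ acts by $k$-rational automorphisms of $U$). Multiplication by $|\Gamma|$ then acts invertibly on $H^1(\Gamma , U(k))$ while also annihilating it, so this group vanishes.

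For the inductive step, let $U_1$ denote the last non-trivial term of the descending central series of $U$, viewed as an algebraic $k$-subgroup. Being characteristic, $U_1$ is preserved by $\Gamma$; being central in $U$ and commutative unipotent, it falls under the base case, so $H^1(\Gamma , U_1(k)) = 1$. Since $U_1$ is unipotent and $\mathrm{char}\: k = 0$, additive Hilbert 90 gives $H^1(k , U_1) = 0$, so the sequence
$$
1 \longrightarrow U_1(k) \longrightarrow U(k) \longrightarrow (U/U_1)(k) \longrightarrow 1
$$
is a short exact sequence of $\Gamma$-groups. Since $\dim U/U_1 < \dim U$, the induction hypothesis yields $H^1(\Gamma , (U/U_1)(k)) = 1$.

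Applying the exact sequence of pointed sets in nonabelian cohomology
$$
H^1(\Gamma , U_1(k)) \longrightarrow H^1(\Gamma , U(k)) \longrightarrow H^1(\Gamma , (U/U_1)(k)),
$$
the triviality of the right-hand term forces every class in $H^1(\Gamma , U(k))$ to lie in the image of $H^1(\Gamma , U_1(k)) = 1$, hence to be trivial. The only point that really needs to be verified carefully is the existence of a $\Gamma$-invariant filtration with commutative unipotent subquotients; this is handled by the descending central series, which is algebraically characteristic. The characteristic zero hypothesis is then used in two places: it forces commutative unipotent $k$-groups to be vector groups, making $U(k)$ into a $\Q[\Gamma]$-module, and it guarantees $H^1(k , U_1) = 0$ so that the sequence of $k$-points remains short exact at each stage of the induction.
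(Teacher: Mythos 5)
Your argument follows the same inductive template as the paper's --- induct on $\dim U$, reduce to a central commutative piece via a long exact sequence in nonabelian cohomology, and handle the commutative base case via unique divisibility --- but the choice of central subgroup differs, and that is where the gap lies. You pass to the last nontrivial term $U_1$ of the \emph{algebraic} descending central series and assert that $U_1(k)$ is preserved by $\Gamma$ because $U_1$ is ``algebraically characteristic.'' But the lemma allows $\Gamma$ to act on $U(k)$ by \emph{arbitrary} abstract group automorphisms, not necessarily ones induced by $k$-rational automorphisms of the algebraic group $U$. An algebraically characteristic subgroup need not give a characteristic subgroup of $U(k)$ under abstract automorphisms unless one first proves that $U_1(k)$ coincides with the last term of the abstract lower central series of the group $U(k)$. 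That identification is true in characteristic $0$ (via the exponential/Malcev correspondence between $U(k)$ and $\mathfrak{u}(k)$), but it is a genuine fact that needs proof, and your parenthetical escape hatch --- ``assuming that $\Gamma$ acts by $k$-rational automorphisms of $U$'' --- changes the statement of the lemma and is not available in its applications.

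The paper avoids this entirely by taking $V = Z(U)$, the algebraic center: by Zariski density of $U(k)$ in $U$ one has $Z(U)(k) = Z(U(k))$, the abstract center, which is tautologically characteristic under any automorphism of the abstract group $U(k)$. This is the cleaner choice and closes the gap. Incidentally, the parenthetical hypothesis is also unnecessary for your base case: once $U(k) \simeq k^n$ is a uniquely divisible abelian group, its $\Q$-vector space structure is determined purely by the group law (division by $|\Gamma|$ is well defined), so every abstract automorphism is automatically $\Q$-linear and $|\Gamma|$ kills and inverts $H^1$ for any $\Gamma$-action. To repair your proof with minimal change, simply replace $U_1$ by $Z(U)$ and cite Zariski density; everything else goes through.
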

\begin{proof}
If $U$ is commutative, then $U \simeq (\mathbb{G}_a)^d$ (cf. \cite[Ch. II, Remark 7.3]{Borel}). Then $U(k) \simeq k^d$ is a uniquely divisible abelian group, so $H^1(\Gamma , U(k)) = 0$.
In the general case, we argue by induction on $\dim U$. We may assume that $U$ is noncommutative, hence the center $V := Z(U)$ is a proper $k$-defined subgroup of positive dimension. We then have the exact sequence
$$
1 \to V \longrightarrow U \longrightarrow U/V \to 1
$$
of unipotent $k$-groups, where $V$ and $U/V$ are of dimension $< \dim U$. Since the Galois cohomology $H^1(\bar{k}/k , V)$ is trivial (cf., for example, \cite[Ch. 2, Lemma 2.7]{PlRa}), the sequence of $k$-points
$$
1 \to V(k) \longrightarrow U(k) \longrightarrow (U/V)(k) \to 1
$$
is also exact, i.e. $(U/V)(k)$ can be naturally identified with the quotient $U(k)/V(k)$. Moreover, the fact that  $U(k)$ is Zariski-dense in $U$ implies that $V(k)$ is precisely the center of $U(k)$, hence is invariant under the action of $\Gamma$. Then the action of $\Gamma$ on $U(k)$ also descends to $(U/V)(k) = U(k)/V(k)$. We have the following exact sequence of pointed sets
$$
H^1(\Gamma , V(k)) \longrightarrow H^1(\Gamma , U(k)) \longrightarrow H^1(\Gamma , (U/V)(k)).
$$
By the induction hypothesis, the sets $H^1(\Gamma , V(k))$ and $H^1(\Gamma , (U/V)(k))$ are trivial, so it follows from the above sequence that the set $H^1(\Gamma , U(k))$ is also trivial.
\end{proof}

\vskip1mm

\noindent {\bf Remark.} The assertion of Lemma \ref{L:1} remains valid if $p = \mathrm{char}\: k > 0$ if one assumes that $U$ is connected and $k$-split and the order of $\Gamma$ is prime to $p$. (Indeed, it follows from \cite[Proposition B.3.2]{CGP} that such a $U$ possesses a $k$-defined central subgroup isomorphic to $\mathbb{G}_a$.)

\vskip1mm

\begin{cor}\label{C:Conj}
Let $G$ be a semi-direct product  $N \rtimes H$, where $N$ is the group of $k$-points $U(k)$ of a unipotent group $U$ over a field $k$ of characteristic 0. Then every finite subgroup of $G$ is conjugate to a subgroup of $H$.
\end{cor}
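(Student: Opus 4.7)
The plan is to derive this as a direct consequence of Lemmas \ref{L:3} and \ref{L:1}, with the extra ingredient being that the group $N = U(k)$ is torsion-free. Let $\Gamma \subset G$ be a finite subgroup and let $\pi \colon G \to H$ be the canonical projection. To apply Lemma \ref{L:3}, I need to verify the two hypotheses: that $\Gamma \cap N = \{e\}$ and that $H^1(\pi(\Gamma), N) = 1$.

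For the first hypothesis, I would argue that $U(k)$ contains no nontrivial elements of finite order when $\mathrm{char}\: k = 0$. Indeed, after fixing a faithful $k$-defined embedding $U \hookrightarrow \mathrm{GL}_n$, every $u \in U(k)$ is a unipotent matrix, so $u - I$ is nilpotent. If $u^m = I$ for some $m \geq 1$, then the minimal polynomial of $u$ divides both $(X-1)^n$ and $X^m - 1$; in characteristic $0$, the greatest common divisor of these polynomials is $X - 1$, forcing $u = I$. Therefore $\Gamma \cap N$ is a finite subgroup of a torsion-free group, hence trivial, so $\pi$ restricts to an isomorphism $\Gamma \xrightarrow{\sim} \pi(\Gamma) =: \Psi \subset H$.

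For the second hypothesis, I would simply invoke Lemma \ref{L:1}: since $\Psi$ is a finite group acting on $N = U(k)$ with $U$ unipotent in characteristic $0$, the cohomology set $H^1(\Psi, N)$ is trivial.

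With both hypotheses of Lemma \ref{L:3} verified, that lemma immediately yields that $\Gamma$ is conjugate in $G$ to a subgroup of $H$, completing the proof. There is no genuine obstacle here; the whole content is packaged into the preceding two lemmas, and the only small observation added is the torsion-freeness of $U(k)$ in characteristic zero, which lets us promote the abstract hypothesis $\Gamma \cap N = \{e\}$ to an automatic consequence of $\Gamma$ being finite.
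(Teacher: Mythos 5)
Your proof is correct and follows essentially the same route as the paper, which simply cites Lemmas~\ref{L:3} and~\ref{L:1}; the only difference is that you explicitly verify the hypothesis $\Gamma \cap N = \{e\}$ by observing that $U(k)$ is torsion-free in characteristic zero, a point the paper leaves implicit.
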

\begin{proof} This follows from Lemmas \ref{L:3} and \ref{L:1}. \end{proof}

\section{Finite subgroups}

\noindent {\it Proof of Theorem \ref{T:I1}.} Let $\mathscr{B}$ be the product of buildings constructed for the reductive $k$-group $G$ over the field $K = k(\!(1/t)\!)$ in \S 2.4. We recall that the group $G(K)$ naturally acts on $\mathscr{B}$ by isometries, and therefore the resulting action of the group $\Gamma = G(k[t]) \subset G(K)$ on $\mathscr{B}$ is also isometric. According to Theorem \ref{T:FPT}, any finite subgroup $\Psi$ of $\Gamma$ fixes a point $x \in \mathscr{B}$. Applying Proposition \ref{P:genred}(1), we see that replacing $\Psi$ by a $\Gamma$-conjugate subgroup, we may assume that $x$ lies in the sector $\mathscr{Q}$ in the product $\mathscr{A}$ of standard apartments (cf. \S 2.4). According to Proposition \ref{P:genred}(2), the stabilizer $\Gamma_x$ is of the form $\mathscr{U}(k) \rtimes L(k)$ for some reductive $k$-subgroup $L$ of $G$ and some $k$-defined $k$-split unipotent group $\mathscr{U}$. Then by Corollary \ref{C:Conj}, the subgroup $\Psi$ is conjugate to a subgroup of $L(k) \subset G(k)$ within $\Gamma_x$, and the required fact follows. \hfill $\Box$

\vskip1mm

Before proving property (FC) for $G(k[t])$ over a $p$-adic field $k$, we first establish one finiteness result over a significantly broader class of fields. For this, we recall that according to Serre \cite[Ch. III, \S 4]{Serre-GC}, a profinite group $\mathscr{G}$ has {\it type} $(\mathrm{F})$ if it satisfies the following property:

\vskip2mm

\noindent $(\mathrm{F})$ {\it For every $n \geq 1$, the group $\mathscr{G}$ has only finitely many open subgroups of index $\leq n$.}

\vskip2mm

\noindent Furthermore, a field $k$ is of type $(\mathrm{F})$ if it is perfect and its absolute Galois group $\mathrm{Gal}(\bar{k}/k)$ is of type $(\mathrm{F})$. Examples of fields of type $(\mathrm{F})$ include $\mathbb{C}$, $\mathbb{R}$, and finite extensions of $\mathbb{Q}_p$ --- cf. {\it loc. cit.} For more examples of fields satisfying condition $(\mathrm{F})$ and its generalizations, we refer the reader to \cite{IR}.

\begin{prop}\label{P:F1}
Let $G$ be a connected linear algebraic group over a field $k$ of characteristic 0 that is of type $(\mathrm{F})$. Then for every finite group $\Gamma$, the group $G(k)$ has only finitely many conjugacy classes of subgroups isomorphic to $\Gamma$.
\end{prop}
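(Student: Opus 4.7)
The plan is to count conjugacy classes of subgroups $H \subset G(k)$ isomorphic to $\Gamma$ by counting $G(k)$-conjugacy classes of homomorphisms $\rho \colon \Gamma \to G(k)$; since $\vert \mathrm{Aut}(\Gamma) \vert < \infty$, finiteness of the latter implies finiteness of the former. To organize these homomorphisms, I would work with the affine $k$-scheme of finite type $R := \mathrm{Hom}(\Gamma , G)$, viewed as a closed subscheme of $G^{\vert \Gamma \vert}$ cut out by the group-multiplication relations in $\Gamma$, equipped with the conjugation action of $G$.

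\vskip1mm

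The argument then splits into two steps. \emph{First}, I would show that the set of orbits $R(\bar{k})/G(\bar{k})$ is finite. Using the Levi decomposition $G = U \rtimes L$ in characteristic zero and Mostow's theorem asserting that every finite subgroup of $G(\bar{k})$ is $G(\bar{k})$-conjugate into $L(\bar{k})$, this reduces to the case when $G$ is reductive. For reductive $G$ in characteristic zero, every homomorphism $\Gamma \to G(\bar{k})$ has completely reducible image (as $\Gamma$ is finite); fixing a faithful embedding $G \hookrightarrow \mathrm{GL}_n$, we obtain only finitely many $n$-dimensional $\bar{k}$-representations of $\Gamma$ up to isomorphism, and a standard argument (based on Richardson's theorem that orbits of completely reducible homomorphisms under a reductive group are closed, which in our setting accounts for \emph{all} orbits) yields the required finiteness.

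\vskip1mm

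\emph{Second}, I would perform a Galois-cohomological descent from $G(\bar{k})$-orbits to $G(k)$-orbits. Fix a $G(\bar{k})$-orbit $\mathcal{O} \subset R$ containing a $k$-rational point $\rho_0$, and set $Z = Z_G(\rho_0(\Gamma))$, which is a closed $k$-subgroup of $G$, hence a linear algebraic $k$-group. Standard twisting in nonabelian Galois cohomology (see \cite[Ch.~I, \S 5]{Serre-GC}) identifies the set of $G(k)$-orbits on $\mathcal{O}(k)$ with the kernel of the natural map $H^1(k , Z) \to H^1(k , G)$. Since $k$ is of characteristic zero and has type $(\mathrm{F})$, the Borel--Serre finiteness theorem (cf.~\cite[Ch.~III, \S 4]{Serre-GC}) yields that $H^1(k , Z)$ is finite. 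Combining the two steps gives finiteness of $R(k)/G(k)$, hence of the set of $G(k)$-conjugacy classes of subgroups isomorphic to $\Gamma$.

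\vskip1mm

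The main obstacle, I expect, lies in the first step: cleanly justifying finiteness of $G(\bar{k})$-conjugacy classes of homomorphisms $\Gamma \to G(\bar{k})$. This requires both Mostow's conjugacy theorem for maximal reductive subgroups in characteristic zero (to reduce to the reductive case) and a representation-theoretic / geometric invariant-theoretic input in the reductive case. The Galois-cohomological descent step, once Borel--Serre's finiteness result is available, is relatively routine.
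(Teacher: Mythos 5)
Your overall strategy matches the paper's: organize the problem via the representation variety $R = R(\Gamma, G)$, establish finiteness of $G(\bar{k})$-orbits on $R$, and then descend via nonabelian Galois cohomology, identifying the $G(k)$-orbits in each $k$-rational orbit $V = G \cdot \rho$ with the kernel of $H^1(k , C) \to H^1(k , G)$ for $C = Z_G(\rho(\Gamma))$, and invoking finiteness of $H^1(k , C)$ from the type $(\mathrm{F})$ hypothesis. Your second step is essentially identical to the paper's.

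The gap is in your first step. The paper obtains both the finiteness and the Zariski-closedness of the $G(\bar{k})$-orbits in a single citation, \cite[Theorem 2.17]{PlRa}, which rests on Weil's rigidity criterion: since $\Gamma$ is finite and $\mathrm{char}\: k = 0$, one has $H^1(\Gamma , \mathfrak{g}) = 0$ by averaging, so every $G$-orbit in $R$ is \emph{open}; being open, each orbit is also closed (its complement is a union of the remaining open orbits), hence a union of connected components of $R$, and since $R$ is of finite type there are only finitely many. Your proposal instead reduces to $G$ reductive via Mostow/Levi, notes that a faithful embedding $G \hookrightarrow \mathrm{GL}_n$ gives finitely many $n$-dimensional representations of $\Gamma$ up to $\mathrm{GL}_n(\bar{k})$-conjugacy, and appeals to Richardson's closed-orbit theorem. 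But Richardson only gives closedness of the $G$-orbits, not their finiteness: finitely many $\mathrm{GL}_n(\bar{k})$-classes does not by itself bound the number of $G(\bar{k})$-classes within each one --- one still faces a potentially positive-dimensional double coset space $G(\bar{k}) \backslash \{ g \ : \ g\rho(\Gamma)g^{-1} \subset G \} / Z_{\mathrm{GL}_n}(\rho(\Gamma))(\bar{k})$, and closed orbits need not be finite in number (a geometric quotient by a reductive group can well be positive-dimensional). What closes this is precisely the \emph{openness} of the $G$-orbits, i.e.~Weil rigidity, which is what \cite[Theorem 2.17]{PlRa} packages; once you have it, the Mostow/Levi reduction becomes an unnecessary detour, since the argument works directly for an arbitrary connected linear algebraic $G$.
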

\begin{proof}
Let $R = R(\Gamma , G)$ be the variety of representations of $\Gamma$ into $G$ (cf. \cite[2.4.7]{PlRa}). It is enough to show that the number of orbits of the adjoint action of $G(k)$ on $R(k)$ is finite. According to \cite[Theorem 2.17]{PlRa}, there are only finitely many orbits for the adjoint action of $G$ on $R$ and these orbits are Zariski-closed. Now, fix some $\rho \in R(k)$; then the corresponding orbit $V = G \cdot \rho$ is a $k$-defined homogeneous space of $G$.  It is well known that the orbits of $G(k)$ on $V(k)$ are in one-to-one correspondence with the elements of the kernel of the map $H^1(k , C) \to H^1(k , G)$, where $C$ is the centralizer of $\rho$ (cf. \cite[Ch. I, \S5.4, Corollary 1]{Serre-GC}). However, since $k$ is of type $(\mathrm{F})$, the set $H^1(k , C)$ is finite \cite[Ch. III, \S 4, Theorem 4]{Serre-GC}, and our claim follows.
\end{proof}

\vskip1mm

\begin{cor}\label{C:F1}
Let $G$ be an absolutely almost simple algebraic group defined over a field $k$ of characteristic 0 that is of type $(\mathrm{F})$. Then for every finite group $\Gamma$, the group $G(k[t])$ has only finitely many conjugacy classes of finite subgroups isomorphic to $\Gamma$.
\end{cor}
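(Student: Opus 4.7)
The plan is to combine Theorem \ref{T:I1} with Proposition \ref{P:F1} directly, so this should be a very short argument with essentially no obstacles.

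First, I would apply Theorem \ref{T:I1}: every finite subgroup of $G(k[t])$ is conjugate, within $G(k[t])$, to a subgroup of $G(k)$. Consequently, each $G(k[t])$-conjugacy class of finite subgroups isomorphic to $\Gamma$ is represented by some finite subgroup of $G(k)$ isomorphic to $\Gamma$. So if $\mathcal{C}$ is the set of $G(k[t])$-conjugacy classes of finite subgroups of $G(k[t])$ isomorphic to $\Gamma$, the map sending a $G(k)$-conjugacy class of subgroups of $G(k)$ isomorphic to $\Gamma$ to its $G(k[t])$-conjugacy class is surjective onto $\mathcal{C}$.

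Next, since $G$ is absolutely almost simple (in particular connected), Proposition \ref{P:F1} applies and tells us that $G(k)$ has only finitely many conjugacy classes of subgroups isomorphic to $\Gamma$. By the surjection above, this forces $\mathcal{C}$ to be finite, which is precisely the claim. (Note that passing from $G(k)$-conjugacy to $G(k[t])$-conjugacy can only merge classes, so the surjectivity statement causes no difficulty.)

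The only step that required actual work was proving Theorem \ref{T:I1}, which was already carried out via the Fixed Point Theorem, the description of point stabilizers in Proposition \ref{P:genred}, and the cohomological vanishing in Corollary \ref{C:Conj}. With that and Proposition \ref{P:F1} in hand, the corollary is immediate, so there is no real obstacle.
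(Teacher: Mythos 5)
Your proof is correct and follows exactly the same two-step route as the paper: reduce to $G(k)$ via Theorem \ref{T:I1}, then invoke Proposition \ref{P:F1}. The extra sentence spelling out that $G(k)$-conjugacy refines $G(k[t])$-conjugacy is a nice explicit touch but not a deviation in approach.
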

\begin{proof}
By Theorem \ref{T:I1} every finite subgroup of $G(k[t])$ is conjugate to a subgroup of $G(k)$. On the other hand, by Proposition \ref{P:F1}, the group $G(k)$ has only finitely many conjugacy subgroup isomorphic to $\Gamma$, so our assertion follows.
\end{proof}

\vskip1mm

\begin{prop}\label{P:F2}
Let $G$ be a reductive algebraic group over a finite extension $k$ of the $p$-adic field $\Q_p$. Then the group $G(k)$ has finitely many conjugacy classes of finite subgroups.
\end{prop}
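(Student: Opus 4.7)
\medskip

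\noindent \emph{Proof plan.} The strategy is to combine Proposition \ref{P:F1}, which already produces only finitely many conjugacy classes of subgroups for each fixed isomorphism type $\Gamma$, with a uniform bound on the orders of finite subgroups of $G(k)$. The latter limits the possible isomorphism types to a finite set, and the claim will follow by summing over these.

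To obtain the order bound, fix a faithful $k$-defined embedding $G \hookrightarrow \mathrm{GL}_n$. Any finite subgroup $F \subset G(k)$ acts on $k^n$; by the standard averaging argument, $F$ preserves some $\mathcal{O}$-lattice in $k^n$, where $\mathcal{O}$ denotes the valuation ring of $k$. Choosing a basis of this lattice identifies $F$ with a subgroup of $\mathrm{GL}_n(\mathcal{O})$ of the same order.

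For $m$ sufficiently large, the principal congruence subgroup $U_m := I + \pi^m M_n(\mathcal{O})$ of $\mathrm{GL}_n(\mathcal{O})$ is torsion-free: the $p$-adic logarithm converges on $U_m$ and yields a bijection with the additive group $\pi^m M_n(\mathcal{O})$, which is torsion-free. Thus every finite subgroup of $\mathrm{GL}_n(\mathcal{O})$ meets $U_m$ trivially, so its order divides the finite index $[\mathrm{GL}_n(\mathcal{O}) : U_m]$. This yields an integer $N = N(k,n)$, independent of $F$, with $|F| \leq N$ for every finite subgroup $F$ of $G(k)$.

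There are only finitely many isomorphism types of finite groups of order at most $N$. Since a finite extension of $\mathbb{Q}_p$ is a field of type $(\mathrm{F})$ (cf.~\cite[Ch.~III, \S4]{Serre-GC}), Proposition \ref{P:F1} supplies, for each such isomorphism type $\Gamma$, only finitely many $G(k)$-conjugacy classes of subgroups of $G(k)$ isomorphic to $\Gamma$. Summing over the finitely many types proves the proposition. No serious obstacle stands out: the plan simply combines Proposition \ref{P:F1} with the classical boundedness of finite subgroups of $\mathrm{GL}_n$ over a $p$-adic field.
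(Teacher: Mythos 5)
Your proposal is correct and follows essentially the same route as the paper: both reduce to $\mathrm{GL}_n$, conjugate a finite subgroup into $\mathrm{GL}_n(\mathcal{O})$, invoke torsion-freeness of a deep principal congruence subgroup (the paper cites Minkowski's Lemma where you use the $p$-adic logarithm) to bound the isomorphism types, and then apply Proposition~\ref{P:F1}. The only cosmetic difference is that the paper phrases the bound as embedding any finite subgroup into the finite quotient $\mathrm{GL}_n(\mathcal{O})/\mathrm{GL}_n(\mathcal{O},\mathfrak{p}^d)$ rather than bounding the order, but the substance is identical.
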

\begin{proof}
As we already mentioned above, $k$ is of type $(\mathrm{F})$. So, it follows from Proposition \ref{P:F1} that it is enough to show that finite subgroups of $G(k)$ split into finitely many {\it isomorphism classes}. Considering a faithful $k$-defined representation $G \hookrightarrow \mathrm{GL}_n$, we see that it is enough to prove this fact for $G = \mathrm{GL}_n$. It is well known that every finite subgroup of $\mathrm{GL}_n(k)$ is conjugate to a subgroup of $\mathrm{GL}_n(\mathcal{O})$, where $\mathcal{O}$ is the valuation ring of
$k$ (cf. \cite[Proposition 1.12]{PlRa}). On the other hand, the congruence subgroup $\mathrm{GL}_n(\mathcal{O} , \mathfrak{p}^d)$, where $\mathfrak{p} \subset \mathcal{O}$ is the valuation ideal, is torsion-free for $d$ large enough (``Minkowski's Lemma," cf. \cite[p. 234]{PlRa}). Then every finite subgroup of $\mathrm{GL}_n(k)$ is isomorphic to a subgroup of the finite group $\mathrm{GL}_n(\mathcal{O}) / \mathrm{GL}_n(\mathcal{O} , \mathfrak{p}^d)$, and the required fact follows.
\end{proof}

\vskip1mm

\begin{thm}\label{T:FC-1}
Let $G$ be a reductive algebraic group defined over a finite extension $k$ of the $p$-adic field $\mathbb{Q}_p$. Then the group $G(k[t])$ has finitely many conjugacy classes of finite subgroups.
\end{thm}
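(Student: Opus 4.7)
The plan is to deduce Theorem \ref{T:FC-1} by a direct combination of Theorem \ref{T:I1} with Proposition \ref{P:F2}, viewing the former as a tool to push every finite subgroup into $G(k)$ and the latter as a finiteness statement inside $G(k)$. Concretely, I would first fix a finite subgroup $\Psi \subset G(k[t])$ and invoke Theorem \ref{T:I1} to produce an element $g \in G(k[t])$ such that $g \Psi g^{-1} \subset G(k)$. This shows that the natural map
$$
\varphi \colon \{\text{conjugacy classes of finite subgroups of } G(k) \text{ under } G(k)\} \longrightarrow \{\text{conjugacy classes of finite subgroups of } G(k[t]) \text{ under } G(k[t])\},
$$
induced by sending a subgroup of $G(k)$ to its $G(k[t])$-conjugacy class, is \emph{surjective}.

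Next I would estimate the source of $\varphi$. Since $k$ is a finite extension of $\mathbb{Q}_p$, Proposition \ref{P:F2} asserts that the set of $G(k)$-conjugacy classes of finite subgroups of $G(k)$ is finite. Surjectivity of $\varphi$ then forces the set of $G(k[t])$-conjugacy classes of finite subgroups of $G(k[t])$ to be finite as well, which is exactly the content of Theorem \ref{T:FC-1}. Note that it is irrelevant (for this argument) whether two finite subgroups of $G(k)$ that are \emph{not} $G(k)$-conjugate can become $G(k[t])$-conjugate: such a phenomenon only collapses classes in the target of $\varphi$, preserving finiteness.

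There is essentially no serious obstacle once Theorem \ref{T:I1} and Proposition \ref{P:F2} are in place; the role of $p$-adic $k$ enters only in the latter (via Minkowski's lemma for $\mathrm{GL}_n(\mathcal{O})$ to bound the isomorphism types of finite subgroups, together with condition $(\mathrm{F})$ to control $H^1$ of centralizers and hence orbit counts on representation varieties). The characteristic-zero hypothesis and the reductivity of $G$ are inherited from the hypotheses of Theorem \ref{T:I1}, so no additional assumption is needed. I would conclude by stating that $\varphi$ is surjective with finite source, hence the target is finite, proving Theorem \ref{T:FC-1}.
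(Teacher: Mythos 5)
Your argument is exactly the paper's proof, spelled out in slightly more detail: the paper deduces Theorem \ref{T:FC-1} directly from Theorem \ref{T:I1} (conjugating any finite subgroup of $G(k[t])$ into $G(k)$) together with Proposition \ref{P:F2} (finiteness of conjugacy classes of finite subgroups of $G(k)$). Your proposal is correct and follows the same route.
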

\begin{proof}
This follows from Theorem \ref{T:I1} and Proposition \ref{P:F2}.
\end{proof}

\vskip2mm

\section{The Raghunathan--Ramanathan theorem}\label{S:RR}

\noindent {\bf \ref{S:RR}.1. The statement and preliminary remarks.} In \cite{RagRam}, Raghunathan and Ramanathan proved the following theorem.
\begin{thm}\label{T:RagRam}
Let $G$ be a connected reductive algebraic group over a field $k$, and let $\pi \colon B \to \mathbb{A}^1_k$
be a $G$-torsor over the affine line $\mathbb{A}^1_k = \mathrm{Spec}\: k[t]$. If $\pi$ is trivialized by the base change from $k$ to $\bar{k}$ (separable closure), then $\pi$ is obtained by the base change $\mathbb{A}^1_k \to \mathrm{Spec}\: k$ from a $G$-torsor $\pi_0 \colon B_0 \to \mathrm{Spec}\: k$.
\end{thm}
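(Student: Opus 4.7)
The plan is to translate the geometric statement of Theorem \ref{T:RagRam} into a question about nonabelian Galois cohomology and then invoke Theorem \ref{T:I2} directly. Since $\pi$ becomes trivial after base change from $k$ to $\bar{k}$, its class $[\pi] \in \he^1(\A^1_k , G)$ lies in the kernel of the restriction map $\he^1(\A^1_k , G) \to \he^1(\A^1_{\bar{k}} , G)$. The standard inflation--restriction (five-term) sequence in nonabelian étale cohomology, applied to the pro-Galois cover $\A^1_{\bar{k}} \to \A^1_k$ with Galois group $\Ga(\bar{k}/k)$, identifies this kernel with the (injective) image of the inflation map from continuous cohomology $H^1(\Ga(\bar{k}/k) , G(\bar{k}[t]))$. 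Hence $[\pi]$ corresponds to a class $\xi \in H^1(\Ga(\bar{k}/k) , G(\bar{k}[t]))$.

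Next I would reduce to the setting of a finite Galois extension. Represent $\xi$ by a continuous $1$-cocycle $c \colon \Ga(\bar{k}/k) \to G(\bar{k}[t])$. Since $G(\bar{k}[t]) = \bigcup_{\ell/k} G(\ell[t])$, where the union is taken over finite Galois extensions $\ell/k$, and since a continuous cocycle into a discrete set has finite image and a unique minimal factorization, there exists a finite Galois $\ell/k$ such that $c$ factors through $\Ga(\ell/k)$ with values in $G(\ell[t])$. Thus $\xi$ is the inflation of a class $\xi_0 \in H^1(\Ga(\ell/k) , G(\ell[t]))$.

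Now apply Theorem \ref{T:I2}: the map $H^1(\Ga(\ell/k) , G(\ell)) \to H^1(\Ga(\ell/k) , G(\ell[t]))$ is a bijection, so $\xi_0$ is the image of a unique class $\eta \in H^1(\Ga(\ell/k) , G(\ell))$. Via the usual dictionary between cohomology and torsors, $\eta$ determines a $G$-torsor $\pi_0 \colon B_0 \to \mathrm{Spec}\: k$ (trivialized by base change to $\ell$). By functoriality of the identifications above, the pullback of $\pi_0$ along the structure map $\A^1_k \to \mathrm{Spec}\: k$ corresponds to the class $\xi$ in the inflation source, and is therefore isomorphic to $\pi$ as a $G$-torsor over $\A^1_k$. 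This gives the required $\pi_0$.

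The essential input is Theorem \ref{T:I2}; once that is available, Raghunathan--Ramanathan follows with essentially no further work. The only items requiring some care --- but no serious difficulty --- are checking the five-term exact sequence in our nonabelian setting (which is valid since $G$ is a smooth affine $k$-group scheme, so that $\he^0(\A^1_{\bar{k}} , G) = G(\bar{k}[t])$), and the limiting argument that replaces $\bar{k}$ by a finite Galois extension $\ell/k$. I therefore do not anticipate any genuine obstacle in the reduction itself: the real content of the theorem has been absorbed into Theorem \ref{T:I2}.
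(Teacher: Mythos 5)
Your proposal is correct and follows essentially the same route as the paper: both use the nonabelian Hochschild--Serre sequence for $\mathbb{A}^1_{\bar{k}} \to \mathbb{A}^1_k$ to identify the $G$-torsors over $\mathbb{A}^1_k$ trivialized over $\bar{k}$ with $H^1(\mathrm{Gal}(\bar{k}/k), G(\bar{k}[t]))$, and then reduce the statement to Theorem~\ref{T:I2}. You make the limit over finite Galois extensions $\ell/k$ explicit, which the paper leaves implicit, but the substance and the key input (Theorem~\ref{T:I2}, which carries the characteristic-zero hypothesis) are identical.
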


In this section, we present a simple proof of this theorem over fields of characteristic zero that is based on the Fixed Point Theorem; the argument in the general case requires some technical adjustments. First, we note that if $k$ has characteristic zero (or, more generally, is perfect) then every $G$-torsor  over the affine line is trivialized by the base change $\bar{k}/k$. In fact, this remains true if $\mathbb{A}^1_k$ is replaced by a Zariski-open subset, and actually by any geometrically connected smooth affine curve if $G$ is semi-simple.

\begin{lemma}\label{L:Triv}
Let $K$ be an algebraically closed field of characteristic 0, let $G$ be a connected reductive algebraic $K$-group, and let $C$ be a smooth affine curve over $K$. In each of the following situations

\vskip1mm

\noindent {\rm (1)} \parbox[t]{15cm}{$C$ is a Zariski-open subset of $\mathbb{A}^1_K$;}

\vskip1mm

\noindent {\rm (2)} \parbox[t]{15cm}{$G$ is semi-simple,}

\vskip1mm

\noindent we have $H^1(C , G) = 1$.
\end{lemma}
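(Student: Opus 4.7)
The plan is to derive both cases from Harder's vanishing theorem for simply connected semisimple groups, together with Artin's cohomological dimension bound and standard twisting of central extensions. Two ingredients will carry the argument: first, for any simply connected semisimple $K$-group $\tilde{G}$ and any smooth affine curve $C$ over the algebraically closed, characteristic-zero field $K$, one has $H^1(C, \tilde{G}) = 1$ (Harder's theorem); second, for such a $C$ the \'etale cohomological dimension satisfies $\mathrm{cd}(C) \leq 1$, so that $H^2(C, F) = 0$ for every finite commutative $K$-group scheme $F$ (Artin vanishing).

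I would first treat case (2). Given a semisimple $G$, let $\pi \colon \tilde{G} \to G$ denote its simply connected central cover, with kernel the finite central subgroup $\mu \subset \tilde{G}$. The central extension $1 \to \mu \to \tilde{G} \to G \to 1$ produces the exact sequence of pointed cohomology sets
$$
H^1(C, \tilde{G}) \longrightarrow H^1(C, G) \longrightarrow H^2(C, \mu).
$$
The left-hand set is trivial by Harder, and the right-hand group vanishes by Artin; exactness then forces $H^1(C, G) = 1$.

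For case (1), I would reduce to (2) by means of the isogeny decomposition of a connected reductive group. With $Z = Z(G)^{\circ}$ the central torus and $H = [G, G]$ the derived semisimple subgroup, the product map is a central $K$-isogeny $Z \times H \to G$ with finite kernel $F = Z \cap H$. The associated exact sequence reads
$$
H^1(C, Z) \times H^1(C, H) \longrightarrow H^1(C, G) \longrightarrow H^2(C, F).
$$
By case (2), $H^1(C, H) = 1$. Since $K$ is algebraically closed, $Z \simeq \mathbb{G}_m^r$, and hence $H^1(C, Z) \simeq \mathrm{Pic}(C)^r$; as $C$ is an open subscheme of $\mathbb{A}^1_K$, its coordinate ring is a localization of the principal ideal domain $K[t]$, hence itself a principal ideal domain, so $\mathrm{Pic}(C) = 0$. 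Finally, $H^2(C, F) = 0$ again by Artin vanishing. Exactness yields $H^1(C, G) = 1$, as required.

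The main difficulty is concentrated in Harder's theorem, which is the genuinely nontrivial input from the theory of principal bundles over curves; the remaining pieces---the vanishing of $\mathrm{Pic}(C)$, the cohomological dimension bound $\mathrm{cd}(C) \leq 1$, and the formal cohomology sequences attached to central isogenies---are standard and require no extra ingenuity.
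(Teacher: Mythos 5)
Your proof is correct, and it takes a genuinely different route from the paper's. The paper argues via the specialization map $\beta \colon H^1(C,G) \to H^1(K(C),G)$: after invoking Tsen's theorem and Steinberg's theorem to kill $H^1(K(C),G)$, it identifies $\ker\beta$ with Nisnevich's adelic class set $\mathrm{cl}(G,K(C),V)$, uses strong approximation to push representatives into a maximal torus, and then identifies the resulting class group with $\mathrm{Pic}\:C$. Case (1) follows since $\mathrm{Pic}\:C=0$, and case (2) follows by combining the divisibility of $\mathrm{Pic}\:C$ with the universal cover $\widetilde{G}\to G$ and strong approximation for $\widetilde{G}$. You instead black-box Harder's vanishing $H^1(C,\widetilde{G})=1$ for simply connected semi-simple $\widetilde{G}$, then pass to $G$ via the long exact sequence of the central isogeny, using Artin's affine vanishing $H^2(C,F)=0$ to kill the obstruction; case (1) is then reduced to case (2) by the central isogeny $Z\times H\to G$ together with $\mathrm{Pic}\:C=0$. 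Both arguments are valid. Your route is shorter and more formal, but it concentrates all the genuine content into Harder's theorem, whose proof for affine curves over algebraically closed fields is itself usually carried out by arguments close to the ones the paper runs explicitly (adelic class sets plus strong approximation for the simply connected group); the paper's route is longer but more self-contained, and it also makes the mechanism behind case (2) --- divisibility of the Picard group of an affine curve forcing the class set to collapse --- visible rather than hidden inside a cited theorem. One small point worth spelling out in your case (1): the kernel $F=Z\cap H$ sits in $Z\times H$ via the anti-diagonal $f\mapsto(f,f^{-1})$, and it is central there because $Z\cap H\subset Z(G)\cap H\subset Z(H)$; with that noted, the cohomology sequence you wrote is exactly the right one.
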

\begin{proof}
(Cf. the proof of Corollary 7.5 in \cite{CRR-GR}) Let $\beta \colon H^1(C , G) \to H^1(K(C) , G)$ be the map given by specialization to the
generic point. By Tsen's theorem, the field $K(C)$ has cohomological dimension $\leq 1$ (cf. \cite[Ch. II, 3.3]{Serre-GC}), so applying a theorem due to Steinberg \cite[Ch. III, 2.3]{Serre-GC}, we see that $H^1(K(C) , G) = 1$. So, it is enough to show that $\ker \beta$ is trivial. On the other hand, according to \cite{Nisn2}, there is a bijection between $\ker \beta$ and the double coset space
$$
\mathrm{cl}(G, K(C), V) := G(\mathbf{A}^{\infty}(V)) \backslash G(\mathbf{A}(V)) / G(K(C))
$$
where $G(\mathbf{A}(V))$ is the group of {\it rational} adeles of $G$ associated with the set $V$ of discrete valuations corresponding to the closed points of $C$, and $G(\mathbf{A}^{\infty}(V))$ and $G(K(C))$ are the subgroups of integral and principal adeles, respectively (see \cite{CRR-Isr} for a discussion of adeles in this context). Fix a maximal $K$-torus $T$ of $G$, which of course splits over $K$. A standard argument (cf. \cite[proof of Theorem 4.1]{CRR-Isr}) using strong approximation for the opposite maximal unipotent with respect to $V$ (which holds because $C$ is assumed to be affine) shows that every double coset in the class set $\mathrm{cl}(G, K(C), V)$ has a representative lying in $T(\mathbf{A}(V))$. We have a $K$-isomorphism $T \simeq \mathbb{G}_m^d$, where $d = \dim T$, and  the double coset space $\mathrm{cl}(\mathbb{G}_m, K(C), V)$ is a group that can be identified with the Picard group $\mathrm{Pic}\: C$.

Now, if $C$ is a Zariski-open subset of $\mathbb{A}^1_K$, then the group $P := \mathrm{Pic}\: C$ is trivial, and the triviality of $\ker \beta$ follows immediately. In the general case, we assume that $G$ is semi-simple, and let $\pi \colon \widetilde{G} \to G$ denote the universal cover. Since the curve $C$ is affine, there exists a canonical surjective group homomorphism $\mathbf{J}(K) \to P$, where $\mathbf{J}$ is the Jacobian of projective curve $\mathbf{C}$ that contains $C$.  It follows that the group $P$ is divisible, i.e. $nP = P$ for any $n \geq 1$. Consequently, for any $n \geq 1$, every coset in the class group $\mathrm{cl}(T, K(C), V)$ can be chosen in $T(\mathbf{A}(V))^n$. On the other hand, for $n = \vert \ker \pi \vert$, we have the inclusion
\begin{equation}\label{E:N}
G(\mathbf{A}(V))^n \subset \pi(\widetilde{G}(\mathbf{A}(V))).
\end{equation}
Again, since $\widetilde{G}$ is $K$-split, then arguing as in \cite[proof of Theorem 4.1]{CRR-Isr},  we conclude that $\tilde{G}$ has strong approximation and therefore $\widetilde{G}(\mathbf{A}(V)) = \widetilde{G}(\mathbf{A}^{\infty}(V))\widetilde{G}(K(C))$. It follows that $\pi(\tilde{G}(\mathbf{A}(V)) \subset G(\mathbf{A}^{\infty}(V)) G(K(C))$. Combining this with (\ref{E:N}), we see that
$$
G(\mathbf{A}(V))^n \subset G(\mathbf{A}^{\infty}(V)) G(K(C)),
$$
and, in particular, $T(\mathbf{A}(V))^n$ is contained in $G(\mathbf{A}^{\infty}(V)) G(K(C))$. Thus, we conclude that $\mathrm{cl}(G, K(C), V)$ reduces to a single element, and hence $\ker \beta$ is trivial.
\end{proof}

\vskip1mm

Turning now to the proof of Theorem \ref{T:RagRam} in the case where $\mathrm{char}\: k = 0$, we recall the Hochschild--Serre sequence
\begin{equation}\label{E:HS}
1 \to H^1(\mathrm{Gal}(\bar{k}/k) , G(\bar{k}[t])) \longrightarrow H^1(\mathbb{A}^1_k , G) \stackrel{\rho}{\longrightarrow} H^1(\mathbb{A}^1_{\bar{k}} , G),
\end{equation}
which is exact as a sequence of pointed sets (see \cite[2.2 and 2.9]{Gille} and also \cite[p. 96]{Milne-LEC} for the commutative case). The $G$-torsors  over $\mathbb{A}^1_k$ trivialized by the base change $\bar{k}/k$ are classified by the
the elements of $\ker \rho$. On the other hand, (\ref{E:HS}) yields a natural bijection
$$
\ker \rho \, \simeq \, H^1(\mathrm{Gal}(\bar{k}/k) , G(\bar{k}[t])).
$$
It follows that to complete the proof of Theorem \ref{T:RagRam}, it is enough to prove Theorem \ref{T:I2}. Furthermore, the map $H^1(\bar{k}/k , G(\bar{k})) \to H^1(\bar{k}/k , G(\bar{k}[t]))$ is injective by specialization argument (we can specialize at any $t \in k$, e.g. $t = 0$). So, we only need to prove that it is surjective.

\vskip1mm

\noindent {\bf \ref{S:RR}.2. Reduction to quasi-split simple adjoint groups.} We will now show that it is enough to prove Theorem \ref{T:I2} for quasi-split simple adjoint groups. First, we reduce to the case of adjoint groups. Given a reductive $k$-group $G$, there exists an exact sequence of algebraic groups
$$
1 \to F \longrightarrow G \stackrel{\pi}{\longrightarrow} \overline{G} \to 1
$$
in which $F$ is finite and $\overline{G}$ is the direct product $\overline{H} \times T$ of a semi-simple adjoint $k$-group $\overline{H}$ and a $k$-torus $T$. As in the proof of Corollary \ref{L:isog}, we see that Theorem \ref{T:Stavr1} implies the equality
$$
\overline{G}(\bar{k}[t]) = \overline{G}(\bar{k}) \cdot \pi_{\bar{k}[t]}(G(\bar{k}[t])),
$$
and since $\pi_{\bar{k}}(G(\bar{k})) = \overline{G}(\bar{k})$, we conclude that the sequence
\begin{equation}\label{E:ExSeq1}
1 \to F(\bar{k}) \longrightarrow G(\bar{k}[t]) \stackrel{\pi_{\bar{k}[t]}}\longrightarrow \overline{G}(\bar{k}[t]) \to 1
\end{equation}
is exact. We then have the following commutative diagram with exact rows
$$
\xymatrix{H^1(\bar{k}/k , G(\bar{k})) \ar[r]^{\gamma_1} \ar[d]_{\alpha} & H^1(\bar{k}/k , \overline{G}(\bar{k})) \ar[r] \ar[d]_{\beta} & H^2(\bar{k}/k , F(\bar{k})) \ar[d]_{=} \\ H^1(\bar{k}/k , G(\bar{k}[t])) \ar[r]^{\gamma_2} & H^1(\bar{k}/k , \overline{G}(\bar{k}[t])) \ar[r] & H^2(\bar{k}/k , F(\bar{k}))}
$$
If we assume that the map $H^1(\bar{k}/k , \overline{H}(\bar{k})) \to H^1(\bar{k}/k , \overline{H}(\bar{k}[t]))$ is surjective, then in view of the fact that $T(\bar{k}[t]) =
T(\bar{k})$ (which follows from the case of $\mathbb{G}_m$), the map $\beta$ will also be  surjective, hence bijective. So, by a diagram chase, we find that $\alpha$ is also surjective (one needs to use the fact that $H^1(\bar{k}/k , F(\bar{k}))$ acts transitively on the fibers of $\gamma_2$ --- cf. \cite[Ch. I, \S 5, Proposition 42, p. 54]{Serre-GC}).

Assume now that $G$ is semi-simple and adjoint. Writing $G$ as a direct product of $k$-simple groups, each of which is obtained by restriction of scalars from an absolutely simple group, we see that it is enough to consider the case where $G$ is an absolutely simple adjoint group. Such a group is an inner twist of a quasi-split group $G_0$ (cf. \cite[7.2]{Con} or \cite[Proposition 16.4.9]{Springer}), and we have compatible bijections
$$
H^1(\bar{k}/k , G(\bar{k}))  \simeq H^1(\bar{k}/k , G_0(\bar{k})) \ \ \text{and} \ \  H^1(\bar{k}/k , G(\bar{k}[t]))  \simeq H^1(\bar{k}/k , G_0(\bar{k}[t]))
$$
(cf. \cite[Ch. I, 5.3]{Serre-GC}). So, without loss of generality we may assume that $G$ is quasi-split.

\vskip1mm

\noindent {\bf \ref{S:RR}.3. Conclusion of the argument.} Thus, it remains to be shown that if $k$ is a field of characteristic 0, then {\it for an absolutely simple adjoint quasi-split $k$-group $G$ and a finite Galois extension $\ell/k$ with Galois group $\mathscr{G} = \mathrm{Gal}(\ell/k)$, the natural map
\begin{equation}\label{E:surj10}
H^1(\mathscr{G} , G(\ell)) \longrightarrow H^1(\mathscr{G} , G(\ell[t]))
\end{equation}
is surjective}.
For this we will use the natural action of the group $\Omega = \Gamma_{\ell} \rtimes \mathscr{G}$, where $\Gamma_{\ell} = G(\ell[t])$, on the Bruhat--Tits building $\mathscr{B}_{\ell}$ associated with $G$ over $L = \ell(\!(1/t)\!)$ --- see \S \ref{S:building} for the relevant notations.

Let $S$ be a maximal $k$-split torus of $G$, and $T$ be a maximal $k$-torus of $G$ containing $S$. Then the splitting field $\ell_0$ of $T$ is the minimal Galois extension of $k$ over which $G$ becomes split. It follows from the inflation-restriction sequence that without loss of generality, we may (and will) assume that $\ell \supset \ell_0$. 
Since $G$ is $k$-quasi-split, there exists a $k$-defined Borel subgroup $B$ of $G$ containing $T$. Then the system of simple roots $\Delta$ in the root system $\Phi(G , T)$ corresponding to $B$ is invariant under the Galois group $\mathscr{G}$.
Let $\mathscr{A}_{\ell}$ be the apartment in $\mathscr{B}_{\ell}$ corresponding to $T$, and let $\mathscr{Q}_{\ell}$ be the sector in $\mathscr{A}_{\ell}$ corresponding to the system of simple roots $\Delta$ (see \S \ref{S:building}). Since $T$ is $k$-defined, the apartment $\mathscr{A}_{\ell}$ is $\mathscr{G}$-invariant, and furthermore, since $\Delta$ is $\mathscr{G}$-invariant, $\mathscr{Q}_{\ell}$ is also $\mathscr{G}$-invariant.

Now, let $\xi \colon \mathscr{G} \to \Gamma_{\ell}$ be a 1-cocycle and let $$\iota_{\xi} \colon \mathscr{G} \to \Omega = \Gamma_{\ell} \rtimes \mathscr{G},  \ \ \sigma \mapsto (\xi(\sigma) , \sigma)$$ be the corresponding embedding (see Lemma \ref{L:2}). Since the natural action of $\mathscr{G}$ on $\mathscr{B}_{\ell}$ is isometric (due to the fact that the action of $\mathscr{G}$ on $L$ preserves the valuation), so is the action of $\Omega$, hence  using Theorem \ref{T:FPT}, we can find a point  $x \in \mathscr{B}_{\ell}$ fixed by the finite subgroup $\iota_{\xi}(\mathscr{G}) \subset \Omega$. As we have seen in \S \ref{S:building}.3, Theorem \ref{T:Marg} remains valid for not necessarily simply connected groups, so there exists $g \in \Gamma_{\ell}$ such that $y = gx \in \mathscr{Q}_{\ell}$. Thus, replacing $\iota_{\xi}$ with the conjugate embedding $g \iota_{\xi} g^{-1}$, which is of the form $\iota_{\xi'}$ for a cocycle $\xi'$ that is equivalent to $\xi$ (see the proof of Lemma \ref{L:2}), we may (and will) assume that $\iota_{\xi}(\mathscr{G})$ fixes a point $y \in \mathscr{Q}_{\ell}$. Let $F = F_y$ be the carrier of $y$ (closed simplex containing $y$ in its interior). Clearly, $F \subset \mathscr{Q}_{\ell}$ and $F$ is invariant under $\iota_{\xi}(\mathscr{G})$. This means that for any $\sigma \in \mathscr{G}$, we have
\begin{equation}\label{E:last}
\xi(\sigma)(\sigma(F)) = F.
\end{equation}
Since $\mathscr{Q}_{\ell}$ is $\mathscr{G}$-invariant, we conclude that $\sigma(F) \subset \mathscr{Q}_{\ell}$, and then, in view of the uniqueness
in Theorem~\ref{T:Marg}, the condition (\ref{E:last}) implies  that $\sigma(F) = F$. Thus, $F$ is $\mathscr{G}$-invariant. But then $\xi(\sigma)(F) = F$, so since the action of $\Gamma$ is type-preserving (cf. Proposition \ref{P:TP}), we obtain from Lemma \ref{L:TP}
that $\xi(\sigma)$ fixes $F$ pointwise. Thus, $\xi$ is a cocycle with values in $H(F)$, the pointwise stabilizer of $F$.

According to Proposition \ref{P:stab2}, the stabilizer $H(F)$ is of the form $\mathscr{U}(\ell) \rtimes L(\ell)$ where $L \subset G$ is a $k$-defined reductive subgroup and $\mathscr{U}$ is a $k$-defined $k$-split unipotent group. Since the map $H^1(\mathscr{G} , L(\ell)) \to H^1(\mathscr{G} , \mathscr{U}(\ell) \rtimes L(\ell))$ is bijective (cf. \cite[Ch. II, Proposition 2.9]{PlRa}), we see that $\xi$ is equivalent to a cocycle with values values in $L(\ell)$, hence lies in the image of the map $H^1(\ell/k , G(\ell)) \to H^1(\ell/k , G(\ell[t]))$, as required.

\vskip5mm

\vskip3mm

\vskip2mm

\noindent {\small {\bf Acknowledgements.} We are grateful to Vladimir Chernousov and Gopal Prasad for useful comments and suggestions. Thanks are also due to the anonymous referee who carefully read the paper and proposed a number of improvements. The third-named author was partially supported by NSF grant DMS-2154408.}

\vskip5mm

\bibliographystyle{amsplain}

\end{document}